\newtheorem{theorem}{Theorem}[section]
\newtheorem{lemma}[theorem]{Lemma}
\theoremstyle{definition}
\newtheorem{definition}[theorem]{Definition}
\newtheorem{assumption}[theorem]{Assumption}
\newtheorem{remark}[theorem]{Remark}
\begin{document}

\title{Decentralized Observer Design for Virtual Decomposition Control}

\author{Jukka-Pekka Humaloja}
\address{Tampere University, Faculty of Information Technology and Communication Sciences, Mathematics, P.O. Box 692, 33014 Tampere University,  Finland}
\email{jukka-pekka.humaloja@tuni.fi}
\email{lassi.paunonen@tuni.fi}
\author{Janne Koivum\"aki}
\address{Tampere University, Faculty of Engineering and Natural Sciences, Automation
  Technology and Mechanical Engineering, P.O. Box 589, 33014 Tampere University, Finland}
  \email{janne.koivumaki@tuni.fi}
  \email{jouni.mattila@tuni.fi}
  \author{Lassi Paunonen}
  \author{Jouni Mattila}

\thispagestyle{plain}

\begin{abstract}
In this paper, we incorporate velocity observer design into the virtual decomposition control (VDC) 
strategy of an $n$-DoF open chain robotic manipulator. Descending from the VDC strategy, the 
proposed design is based on decomposing the $n$-DoF manipulator into subsystems, i.e., rigid links 
and joints, for which the decentralized controller-observer implementation can be done locally. 
Similar to VDC, the combined controller-observer design is passivity-based, and we show that it 
achieves semiglobal exponential convergence of the tracking error. The convergence analysis is 
carried out using Lyapunov functions based on the observer and controller error dynamics. The 
proposed design is demonstrated in a simulation study of a 2-DoF open chain robotic manipulator in 
the vertical plane.
\end{abstract}

\subjclass[2010]{93C10, 93C15, (34D20, 70Q05)}

\keywords{decentralized controller-observer design, velocity observer, nonlinear  control, virtual decomposition control}

\thanks{J.-P. Humaloja and L. Paunonen are supported by the Academy of Finland Grant number 310489 held by L. Paunonen}
\thanks{L. Paunonen has been funded by the Academy of Finland Grant number 298182.}
\thanks{J. Mattila has been funded by the Academy of Finland Grant number 283171}

\maketitle

\section{Introduction}

The virtual decomposition control (VDC) approach \cite{ZhuBook, Zhu1997} is a nonlinear 
model-based control method that is developed for controlling complex systems, and it has been 
demonstrated to be very effective especially in robotic control 
\cite{Zhu1998adaptive,Zhu2000,Zhu2013,Koivumaki_TRO2015,
	Koivumaki_CEP2019}. The fundamental idea of VDC is that the system can be virtually 
	decomposed 
into \textit{modular subsystems} (such as rigid links and joints), allowing a decentralized control that 
can be designed locally at the subsystem level and that guarantees stability by fully taking into 
account the dynamic interactions among adjacent subsystems. The VDC methodology is introduced 
in greater detail in Section \ref{sec:VDC}.

The existing VDC literature requires that the position and velocity states of the system are 
measurable for the control design. While position measurements can be done accurately, the 
instruments for measuring rotation speed, e.g., tachometers, are known to be often contaminated 
with noise. Velocity data can naturally be  obtained by numerical differentiation of the position 
sensor data but there is no theoretical justification for this method \cite{BerNij93, Ber18arxiv}. Due 
to these challenges, control of $n$-DoF robotic manipulators without velocity data has been 
extensively studied, e.g., in \cite{BerNij93, NicTom90, ZhuChe92, BurDav97, ZerDix99, MalDri12, 
Dri15}, see also the survey \cite{Ber18arxiv}, where the actuator dynamics have been neglected. 
Similar research for robots with, e.g., hydraulic actuators has been done in \cite{BuFYaoICRA00, 
SirSal01}. Our approach to the proposed controller-observer design is inspired by the 
passivity-based design in \cite{BerNij93} and the subsystem-based VDC approach \cite{ZhuBook}.

In this paper, we design a control law for an $n$-DoF open chain robotic manipulator (see  Fig. 
\ref{fig:system}) in such a way that the position trajectories $q_i(t)$ of the $n$ joints follow given 
\textit{desired trajectories} $q_{id}(t)$. Velocity data is not available for the control design, due to 
which we design a velocity observer based on position (measurement) and torque (input) data. We 
note that the manipulator in Fig. \ref{fig:system} is in a planar joint configuration for the sake of 
graphical simplicity; the system kinematics and dynamics are provided in the general 6-DoF 
matrix/vector form instead of the scalar presentation and the joint orientations may be arbitrary.

The main contribution of the paper is incorporating an observer design into the VDC methodology, 
which yields a novel decentralized controller-observer design for robotic manipulators. In 
comparison to the existing literature \cite{BerNij93, Ber18arxiv, NicTom90, ZhuChe92, BurDav97, 
	ZerDix99, MalDri12, Dri15,BuFYaoICRA00, SirSal01} where the designs are based on the dynamics 
	of 
the whole manipulator, in the proposed decentralized design the control and observer gains are 
proportional to the individual link/joint dynamics. Thus, in the proposed design the gain conditions 
remain unaltered even if the complexity of the system (number of DoFs) increases, which is not the 
case for the existing designs where the gain conditions depend on the whole system dynamics. 
Moreover, the proposed design is highly modular in the sense that if parts were replaced in or added 
to the manipulator, the controller-observer design needs to be reimplemented only for the new parts 
while the other parts remain intact. Our main result, semiglobal exponential convergence of the 
proposed design, is presented in Theorem \ref{thm:stability} in Section \ref{sec:stability}. Thereafter, 
Remark \ref{rem:appl} discusses possible extensions of the design and addresses arbitrary joint 
configurations for the $n$-DoF manipulator. Semiglobality of the achieved convergence originates 
from the requirement of the link velocities being bounded, albeit they may be arbitrarily large.  We 
note that a similar approach has been taken, e.g., in \cite{BerNij93}.

\begin{figure*}
	\includegraphics[width=1.00\textwidth]{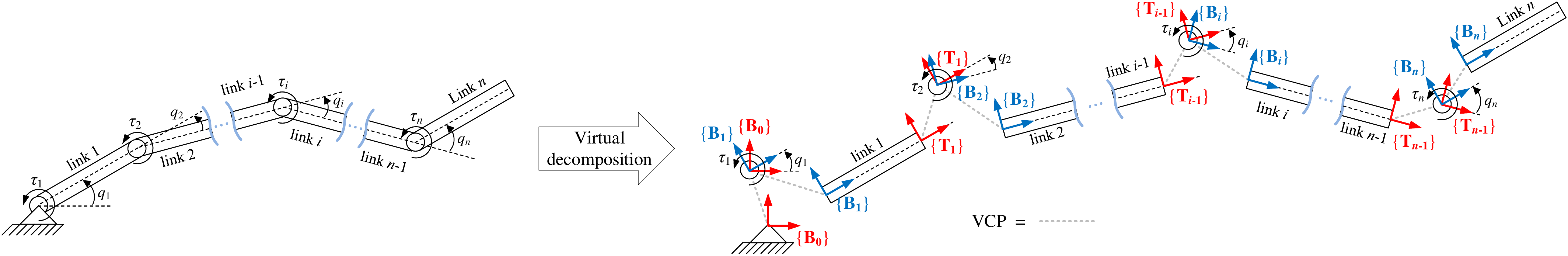}
	\caption{The $n$-DoF open chain robotic manipulator and its virtual decomposition.}
	\label{fig:system}
\end{figure*}

The proposed controller-observer design is based on the VDC design principles in the sense that the 
controller and observer are designed for the links and joints, i.e., the virtual subsystems, individually, 
and the stability analysis of the error dynamics can be carried out locally at the subsystem level in 
terms of \textit{virtual stability} (see Section \ref{sec:VDC}). The idea is to construct Lyapunov 
functions for the subsystems based on the error dynamics, using which the error dynamics can be 
shown to be exponentially stable. The approach is an integral part of VDC in controller stability 
analysis, and in this paper we extend the design and analysis to account for the observer error 
dynamics as well. It should be noted that due to the nonlinear dynamics of the system, the 
separation principle cannot be utilized in stability analysis but the controller and observer 
convergences must be shown simultaneously.

The paper is organized as follows. In Section \ref{sec:math}, we present preliminaries concerning link 
dynamics, stability analysis and the VDC methodology. In Section \ref{sec:system}, we present the 
kinematics and dynamics of the system model. In Section \ref{sec:obs}, we present the decentralized 
joint and link velocity observer designs which are incorporated into the VDC design in Section 
\ref{sec:ctrl}. Exponential stability of the controller and observer error dynamics is shown in Section 
\ref{sec:stability}. In Section \ref{sec:sim}, the proposed design is demonstrated by a numerical 
simulation on a 2-DoF robot in the vertical plane. Finally, the paper is concluded in Section 
\ref{sec:concl}.

\section{Mathematical Preliminaries} \label{sec:math}

\subsection{Dynamics of a Rigid Body} \label{mp:dyn}

Consider an orthogonal, three-dimensional coordinate system $\left\{
\mathbf{A} \right\}$ (frame $\left\{ \mathbf{A} \right\}$) attached to
a rigid body. Let $^{\mathbf{A}} \mathbf{v} \in
\mathbb{R}^3$ and $^{\mathbf{A}} \boldsymbol{\omega}\in \mathbb{R}^3$ be
the linear and angular velocity vectors, respectively, of frame
$\left\{ \mathbf{A} \right\}$, expressed in frame $\left\{ \mathbf{A}
\right\}$ (see \cite[Sect. 2.5]{ZhuBook} for expressing velocities and forces in body frames). To 
facilitate the transformations of velocities among
different frames, the linear/angular velocity vector of frame
\{\textbf{A}\} can be written as
\begin{equation} \label{eq:vel}
	{}^{\mathbf A}V := \begin{bmatrix}
		^{\mathbf A}{{\mathbf
				v}} \\ {}^{{\mathbf A}}{{\mathbf \omega}}
	\end{bmatrix} \in {{\mathbb R}}^6.
\end{equation}
In a similar manner, let ${}^\mathbf{A}\mathbf{p} \in \mathbb{R}^3$
and ${}^{\mathbf{A}}\boldsymbol{\varphi} \in \mathbb{R}^3$ be the linear and angular
position vectors, respectively, of frame $\left\{ \mathbf{A} \right\}$
and define
$^{\mathbf{A}}P := \begin{bmatrix}
	^\mathbf{A} \mathbf{p} \\ ^{\mathbf{A}}\boldsymbol{\varphi}
\end{bmatrix} \in \mathbb{R}^6$,
so that $\displaystyle \frac{d}{dt}(^{\mathbf{A}}P) = {}^{\mathbf{A}}V$.

Let $^{\mathbf{A}} \mathbf{f} \in \mathbb{R}^3$ and
$^{\mathbf{A}} \mathbf{m} \in \mathbb{R}^3$ be the force and
moment vectors applied to the origin of frame $\left\{ \mathbf{A} \right\}$,
expressed in frame $\left\{ \mathbf{A} \right\}$. Similar to \eqref{eq:vel}, the
force/moment vector in frame $\left\{ \mathbf{A} \right\}$ can be written as
\begin{equation} \label{eq:f}
	{}^{{\mathbf A}}F := \begin{bmatrix}
		{}^{{\mathbf A}}{{\mathbf f}} \\ ^{{\mathbf A}}{{\mathbf m}}
	\end{bmatrix} \in {{\mathbb R}}^6.
\end{equation}

Consider two given frames, denoted as $\{ \mathbf{A}\}$ and
\{\textbf{B}\}, fixed to a common rigid body. The following relations
hold: 
\begin{subequations}
	\label{eq:frametrans}%
	\begin{align}
		{}^{{\mathbf B}}V &= {}^{{\mathbf A}}{{{\mathbf U}}^T_{{\mathbf
					B}}}{}^{{\mathbf A}}V \\
		{}^{{\mathbf A}}F &= {}^{{\mathbf A}}{{{\mathbf U}}_{{\mathbf B}}}{}^{{\mathbf B}}F,
	\end{align}
\end{subequations}
where $^{\mathbf{A}} \mathbf{U}_{\mathbf{B}}\in\mathbb{R}^{6\times6}$ denotes a
force/moment transformation matrix that transforms the force/moment
vector measured and expressed in frame $\left\{ \mathbf{B} \right\}$ to the same
force/ moment vector measured and expressed in frame $\left\{ \mathbf{A} \right\}$ (see \cite[Sect. 
2.5.3]{ZhuBook} for details).

Let frame $\left\{ \mathbf{A} \right\}$ be fixed to a rigid body. The rigid body
dynamics, expressed in frame $\left\{ \mathbf{A} \right\}$, can be written as
\begin{equation} \label{eq:linkdyn}
	{{\mathbf M}}_{{\mathbf
			A}}\frac{d}{dt}({}^{{\mathbf A}}V)+{{\mathbf C}}_{{\mathbf
			A}}({}^{{\mathbf A}}{\omega }){}^{{\mathbf A}}V+{{\mathbf
			G}}_{{\mathbf A}}={}^{{\mathbf A}}F^*
\end{equation}
where ${}^{{\mathbf A}}F^{
	*}\in\mathbb{R}^{6}$ is the net force/moment vector of the rigid body
expressed in frame $\left\{ \mathbf{A} \right\}$ and
$\mathbf{M}_{\mathbf{A}}\in\mathbb{R}^{6\times 6}$,
$\mathbf{C}_{\mathbf{A}}(^{\mathbf{A}} \omega) \in
\mathbb{R}^{6\times6}$ and $\mathbf{G}_{\mathbf{A}}\in\mathbb{R}^{6}$
are the mass matrix, the Coriolis/centrifugal matrix and the
gravity vector, respectively (see \cite[Sect. 2.6.2]{ZhuBook} for the detailed expressions).
We note that by the structure of
matrix $\mathbf{C}_{\mathbf{A}}(\cdot)$, it has the following properties
\begin{subequations}
	\label{eq:Cprop}%
	\begin{align}
		\mathbf{C}_{\mathbf{A}}(^{\mathbf{A}}\omega_1)^T & = - 
		\mathbf{C}_{\mathbf{A}}(^{\mathbf{A}}\omega_1) \label{eq:Cp1} \\ 
		\mathbf{C}_{\mathbf{A}}(\alpha_1{}^{\mathbf{A}}\omega_1 
		+\alpha_2{}^{\mathbf{A}}\omega_2)  & = 
		\alpha_1\mathbf{C}_{\mathbf{A}}(^{\mathbf{A}}\omega_1) + 
		\alpha_2\mathbf{C}_{\mathbf{A}}(^{\mathbf{A}}\omega_2) \label{eq:Cp2} \\ 
		\| \mathbf{C}_{\mathbf{A}}(^{\mathbf{A}}\omega_1)\| & \leq 
		M_{c,\mathbf{A}}\|^{\mathbf{A}}\omega_1\| \label{eq:Cp3}
	\end{align}
\end{subequations}
for some $M_{c,\mathbf{A}} > 0$ and for all $\alpha_1,\alpha_2>0$ and $^{\mathbf{A}}\omega_1, 
{}^{\mathbf{A}}\omega_2 \in \mathbb{R}^3$.

\subsection{Virtual Decomposition Control} \label{sec:VDC}

Virtual decomposition control (VDC) is a control design method where the original system is
decomposed into subsystems by placing conceptual \textit{virtual
	cutting points} (VCP) \cite[Def. 2.13]{ZhuBook}. Every such cutting point forms a virtual
cutting surface on the rigid body, where three-dimensional force vectors and
three-dimensional moment vectors can be exerted from one part to
another. Fig. \ref{fig:system} displays a virtual decomposition of an
$n$-DoF robot and the virtual cutting points.

Adjacent subsystems resulting from a virtual decomposition have
dynamic interactions with each other. These interactions are uniquely
defined by scalar terms called \textit{virtual power flows} (VPFs)
\cite[Def. 2.16]{ZhuBook}. With respect to frame $\{{\mathbf A}\}$,
the virtual power flow is given by 
\begin{equation} p_{\mathbf{A}} = ({}^{{\mathbf A}}V_{
		r}-{}^{{\mathbf A}}V)^{T}({}^{{\mathbf A}}F_{r}-{}^{{\mathbf
			A}}F) \label{eq:VPF}
\end{equation} where ${}^{{\mathbf A}}V_{r} \in \mathbb{R}^{6}$
and ${}^{{\mathbf A}}F_{r} \in \mathbb{R}^{6}$ represent the
required vectors of ${}^{{\mathbf A}}V \in \mathbb{R}^{6}$ and
${}^{{\mathbf A}}F \in \mathbb{R}^{6}$, respectively, that will be
presented in Section \ref{sec:ctrl}. 

The VPFs are closely related to \textit{virtual stability}
\cite[Def. 2.17]{ZhuBook} which is the key concept of VDC. Virtual
stability is a tool for analyzing the stability of the system on a
subsystem level, where the subsystems do not need to be 
stable but using VPFs to represent dynamic interactions among adjacent subsystems. Motivated by
\cite[Def. 2.17]{ZhuBook}, we define virtual stability as follows:
\begin{definition}
	\label{def:vs}
	Consider a subsystem $i$ with dynamics $\dot{x}_i = g_i(t,x_i)$
	where $g_i$ is piecewise continuous in $t$ and locally Lipschitz in
	$x_i$. The subsystem $i$ is called \textit{virtually stable} if
	there exists a continuously differentiable function $\nu_i(t,x_i)$ such
	that 
	\begin{subequations}
		\label{eq:vslyap}%
		\begin{align}
			\alpha_{i,1}\|x_i\|^2 & \leq \nu_i(t,x_i) \leq \alpha_{i,2}\|x_i\|^2 \\
			\dot{\nu}_i(t, x_i) & \leq -\alpha_{i,3}\|x_i\|^2 + p(t,x_i)_{\mathbf{A}_{i+1}} -
			p(t,x_i)_{\mathbf{A}_{i-1}} \label{eq:vslyap2}
		\end{align}
	\end{subequations}
	for all $t \geq 0$ and some $\alpha_{i,1},\alpha_{i,2},\alpha_{i,3} > 0$, where
	$p_{\mathbf{A}_{i+1}}$ and $p_{\mathbf{A}_{i-1}}$ are VPFs with respect to
	frames $\{ \mathbf{A}_{i+1}\}$ and $\{ \mathbf{A}_{i-1}\}$,
	respectively, adjacent to subsystem $i$.
\end{definition}

Note that without the VPFs, Definition \ref{def:vs} would coincide
with Lyapunov criteria for exponential stability as in
\cite[Thm. 4.10]{KhalBook}. The virtually stable subsystems will in
fact result in exponential stability of the entire system, as the VPFs in
\eqref{eq:vslyap2} will cancel out when we take the sum of the functions
$\nu_i$ over all the subsystems. We will prove the stability of the entire system in Theorem 
\ref{thm:stability}. The approach is the same as in
\cite[Thm. 2.1]{ZhuBook}, even though the stability arguments here are different.

\begin{remark}
	The VDC design is decentralized and local in the sense that changing
	the control (or dynamics) of a subsystem does not affect the control
	equations of the rest of the system as long as the VPFs among adjacent
	subsystems cancel out. We also note that the general concept of
	virtual stability in \cite[Def. 2.17]{ZhuBook} allows several VPFs
	between the subsystems. That is, the concept is not restricted to open
	chain systems but the restriction is made here merely for
	simplicity. The general formulation of VDC is given in
	\cite[Sect. 4]{ZhuBook}.
\end{remark}

\section{The System Model} \label{sec:system}

Consider the robot with $n$ links as in Fig.
\ref{fig:system} with the given virtual decomposition. For the sake of
generality, we will formulate 
the kinematics and dynamics of the system in the matrix/vector
form in $\mathbb{R}^6$. For more detailed consideration
of the kinematics and dynamics, see \cite[Chap. 3]{ZhuBook} where the
consideration is done for a 2-DoF robot.

\subsection{Kinematics}
\label{sec:kinematics}

Let the system base frame $\left\{ \mathbf{B}_0 \right\}$ have zero
velocity, i.e., $^{\mathbf{B}_0}V  = \mathbf{0}$. Then, using the
notation of Section \ref{mp:dyn}, the kinematics of an arbitrary joint
$i$ can be written as
\begin{equation}
	{}^{\mathbf{B}_i}V = \mathbf{z}_\tau\dot{q}_i +
	{}^{\mathbf{B}_{i-1}}{\mathbf{U}}^T_{{\mathbf{B}_i}}{}^{\mathbf{B}_{i-1}}V,
	\quad i\in \left\{ 1,2,\ldots,n \right\}, \label{eq:B1V}
\end{equation}
where $\mathbf{z}_\tau = [0\ 0\ 0\ 0\ 0\ 1]^T$, $\dot{q}_i$ is the
angular velocity of joint $i$ and ${}^{\mathbf{B}_{i-1}}{\mathbf{U}}_{{\mathbf{B}_i}}$ is as in 
\eqref{eq:frametrans}. Moreover, as the following relation holds for transforming the velocity vectors 
in link $i$:
\begin{equation}
	{}^{\mathbf{T}_i}V =
	{}^{\mathbf{B}_i}{\mathbf{U}}^T_{{\mathbf{T}_i}}{}^{\mathbf{B}_i}V,
	\quad i \in \left\{ 1,2,\ldots, n \right\}, \label{eq:T1V}
\end{equation}
the kinematics of joint $i$ can alternatively be written based
on the link velocities as 
\begin{equation}
	\label{eq:B1Valt}
	^{\mathbf{B}_i}V = \mathbf{z}_{\tau}\dot{q}_i +
	{}^{\mathbf{T}_{i-1}}U_{\mathbf{B}_i}^T{}^{\mathbf{T}_{i-1}}V, \quad i \in
	\left\{ 2,3,\ldots, n \right\}.
\end{equation}

\subsection{Single Link Dynamics in Cartesian Space}

As in \eqref{eq:linkdyn}, the motion dynamics of an arbitrary rigid link $i$ is expressed in frame
\{$\mathbf{B}_i$\} by 
\begin{equation}
	{{\mathbf M}}_{\mathbf{B}_i}\frac{d}{dt}({}^{\mathbf{B}_i}V) +
	{{\mathbf C}}_{\mathbf{B}_i}({}^{\mathbf{B}_i}{\omega
	}){}^{\mathbf{B}_i}V + {{\mathbf G}}_{\mathbf{B}_i} =
	{}^{\mathbf{B}_i}F^*, \quad i \in \left\{ 1,2,\ldots,n \right\}. \label{eq:B1F*}
\end{equation}
Furthermore, the resultant forces/moments of
link $i$ can be expressed as
\begin{equation}
	{}^{\mathbf{B}_i}F = {}^{\mathbf{B}_i}F^* +
	{}^{\mathbf{B}_i}{\mathbf{U}}_{{\mathbf{T}_i}}{}^{\mathbf{T}_i}F,
	\quad i \in \left\{ 1,2,\ldots,n \right\}, \label{eq:B1F}
\end{equation}
where $^{\mathbf{T}_n}F = \mathbf{0}$ as we assume that no external force/moment is imposed on
the origin of the frame $\left\{ \mathbf{T}_n \right\}$. Moreover, the
force/moment vector in frame $\left\{ \mathbf{B}_0 \right\}$ can be
written as 
\begin{equation}
	\label{eq:B0F}
	^{\mathbf{B}_0}F = {}^{\mathbf{B}_0}U_{B_1}{}^{\mathbf{B}_1}F.
\end{equation}

\subsection{Single Joint Dynamics in Joint Space}

The actuation torque of an arbitrary joint $i$
can be obtained from \eqref{eq:B1F} as
\begin{equation}
	\tau_{ai} =
	\mathbf{z}^T_\tau{}^{\mathbf{B}_i}F, \quad i \in \left\{
	1,2,\ldots,n \right\}. \label{eq:tau_a1} 
\end{equation}
Then, similarly to \cite[(3.51)]{ZhuBook}, joint $i$ torque $\tau_i$ (torque
input) can be written as
\begin{equation}
	\tau_i = I_{m,i}\ddot{q}_i + f_{c,i}(\dot{q}_i) + \tau_{ai},
	\quad i \in \left\{ 1,2,\ldots,n \right\} \label{eq:tau_1} 
\end{equation}
where $I_{m,i}$ is the joint moment of inertia and $f_{c,i}$ is
a Coulomb friction function model. The  friction model is
assumed to be increasing, globally Lipschitz continuous
and antisymmetric, e.g., Coulomb-viscous model
(see \cite[Sect. 2.3]{AndSod07}) Note that by monotonicity, the friction function model satisfies 
\begin{equation}
	\label{eq:frprop1} 
	-(x_1-x_2)(f_{c,i}(x_1)-f_{c,i}(x_2)) \leq 0.
\end{equation}
We note that the monotonicity assumption could be lifted if the  first
time derivatives of the functions $f_{c,i}$ are bounded, so that more
advanced friction models (see \cite[Sect. 3]{AndSod07}) could be
incorporated as well.

\section{Observer Design} \label{sec:obs}

In this section, we will consider velocity observers for arbitrary
link $i$ and joint $i$ motivated by the passivity-based observer design of
\cite[Sect. II.B]{BerNij93}. For the design, we need to have position
and torque data available. The final observer designs
must be done simultaneously with the control designs (see
\cite[Sect. II.C]{BerNij93}), which we will do in Section \ref{sec:ctrl}, where the following auxiliary 
analysis will be utilized. We make the following standing assumption:

\begin{assumption}
	Joint torque data $\tau_i$ and position data $q_i$ are available for the observer design for all $i 
	\in \{1,2,\ldots,n\}$.
\end{assumption}

\subsection{Observer for Link $i$} \label{obs:link}

Consider an observer system of the form
\begin{subequations}
	\label{eq:obsl1}%
	\begin{align}
		^{\mathbf{B}_i}\dot{\hat{P}} & = ^{\mathbf{B}_i}Z -
		\mathbf{M}_{\mathbf{B}_i}^{-1} 
		\mathbf{L}_{\mathbf{B}_i}(^{\mathbf{B}_i}\hat{P}-{}^{\mathbf{B}_i}P)
		\\
		\mathbf{M}_{\mathbf{B}_i}{}^{\mathbf{B}_i}\dot{Z}
		& = ^{\mathbf{B}_i}F^{*} -
		\mathbf{C}_{\mathbf{B}_i}(^{\mathbf{B}_i}\hat{\boldsymbol{\omega}})
		^{\mathbf{B}_i}\hat{V} - \mathbf{G}_{\mathbf{B}_i}  \label{eq:obsl12}
	\end{align}
\end{subequations}
where $\mathbf{L}_{\mathbf{B}_i} > 0$ is an error
feedback gain matrix, $[^{\mathbf{B}_i}\hat{P} \quad
^{\mathbf{B}_i}Z]^T$ is the
observer state and $^{\mathbf{B}_i}\hat{V} =
{}^{\mathbf{B}_i}\dot{\hat{P}}$ is the observed velocity. Note that
the second line of the observer simply copies the link dynamics \eqref{eq:B1F*}.

Subtracting \eqref{eq:B1F*} from \eqref{eq:obsl12},
we obtain error dynamics
\begin{equation}
\mathbf{M}_{\mathbf{B}_i}(^{\mathbf{B}_i}\dot{\hat{V}} -
{}^{\mathbf{B}_i}\dot{V}) =
\mathbf{C}_{\mathbf{B}_i}(^{\mathbf{B}_i}\boldsymbol{\omega})^{\mathbf{B}_i}V -
\mathbf{C}_{\mathbf{B}_i}(^{\mathbf{B}_i}\hat{\boldsymbol{\omega}})^{\mathbf{B}_i}\hat{V} 
- \mathbf{L}_{\mathbf{B}_i}(^{\mathbf{B}_i}\hat{V} - {}^{\mathbf{B}_i}V).
\end{equation}
If we define a quadratic function $ \nu_{\mathbf{B}_i, obs}$ as 
\begin{equation}
	\label{eq:vb1obs}
	\nu_{\mathbf{B}_i, obs} := \frac{1}{2}(^{\mathbf{B}_i}\hat{V} -
	{}^{\mathbf{B}_i}V)^T \mathbf{M}_{\mathbf{B}_i}(^{\mathbf{B}_i}\hat{V} -
	{}^{\mathbf{B}_i}V),
\end{equation}
then
\begin{equation}
	\label{eq:vobsB11}
	\begin{split}
		\dot{\nu}_{\mathbf{B}_i,obs} & = (^{\mathbf{B}_i}\hat{V} -
		{}^{\mathbf{B}_i}V)^T[\mathbf{C}_{\mathbf{B}_i}(^{\mathbf{B}_i}\boldsymbol{\omega})^{\mathbf{B}_i}V
		 -
		\mathbf{C}_{\mathbf{B}_i}(^{\mathbf{B}_i}\hat{\boldsymbol{\omega}})^{\mathbf{B}_i}\hat{V}
		] \\
		& \quad - (^{\mathbf{B}_i}\hat{V} - {}^{\mathbf{B}_i}V)^T
		\mathbf{L}_{\mathbf{B}_i} (^{\mathbf{B}_i}\hat{V} -
		{}^{\mathbf{B}_i}V).
	\end{split}
\end{equation}
The term associated with the Coriolis/centrifugal forces can be
written as
\begin{align}
	& \mathbf{C}_{\mathbf{B}_i}(^{\mathbf{B}_i}\boldsymbol{\omega})^{\mathbf{B}_i}V -
	\mathbf{C}_{\mathbf{B}_i}(^{\mathbf{B}_i}\hat{\boldsymbol{\omega}})^{\mathbf{B}_i}\hat{V} \\
	& = \mathbf{C}_{\mathbf{B}_i}(^{\mathbf{B}_i}\boldsymbol{\omega})^{\mathbf{B}_i}V -
	\mathbf{C}_{\mathbf{B}_i}(^{\mathbf{B}_i}\hat{\boldsymbol{\omega}})^{\mathbf{B}_i}V
	- \mathbf{C}_{\mathbf{B}_i}(^{\mathbf{B}_i}\hat{\boldsymbol{\omega}})(^{\mathbf{B}_i}\hat{V}
	- {}^{\mathbf{B}_i}V), \nonumber
\end{align}
so by using \eqref{eq:Cp1}--\eqref{eq:Cp2} we obtain
\begin{equation}
	\label{eq:Cprops}
	\begin{split}
		& (^{\mathbf{B}_i}\hat{V} -
		{}^{\mathbf{B}_i}V)^T[\mathbf{C}_{\mathbf{B}_i}(^{\mathbf{B}_i}\boldsymbol{\omega})^{\mathbf{B}_i}V
		 -
		\mathbf{C}_{\mathbf{B}_i}(^{\mathbf{B}_i}\hat{\boldsymbol{\omega}})^{\mathbf{B}_i}\hat{V}
		] \\
		& = (^{\mathbf{B}_i}\hat{V} -
		{}^{\mathbf{B}_i}V)^T[\mathbf{C}_{\mathbf{B}_i}(^{\mathbf{B}_i}\boldsymbol{\omega})^{\mathbf{B}_i}V
		-
		\mathbf{C}_{\mathbf{B}_i}(^{\mathbf{B}_i}\hat{\boldsymbol{\omega}})^{\mathbf{B}_i}V]
		\\ & =  (^{\mathbf{B}_i}\hat{V} -
		{}^{\mathbf{B}_i}V)^T\mathbf{C}_{\mathbf{B}_i}(^{\mathbf{B}_i}\boldsymbol{\omega}
		-{}^{\mathbf{B}_i}\hat{\boldsymbol{\omega}})^{\mathbf{B}_i}V.
	\end{split}
\end{equation}

Let us now assume that the velocity vector $^{\mathbf{B}_i}V$ is
bounded, i.e., $\sup_{t > 0}\|^{\mathbf{B}_i}V\| =
M_{v,i} < \infty$. Continuing from \eqref{eq:Cprops} and using the
relative boundedness \eqref{eq:Cp3} of $\mathbf{C}_{\mathbf{B}_i}(\cdot)$, we obtain 
\begin{align}
	\|(^{\mathbf{B}_i}\hat{V} -
	{}^{\mathbf{B}_i}V)^T\mathbf{C}_{\mathbf{B}_i}(^{\mathbf{B}_i}\boldsymbol{\omega}
	-{}^{\mathbf{B}_i}\hat{\boldsymbol{\omega}})^{\mathbf{B}_i}V\|
	\leq
	\|^{\mathbf{B}_i}\hat{V}-{}^{\mathbf{B}_i}V
	\|M_{c,i}M_{v,i}\|^{\mathbf{B}_i}\hat{V}-{}^{\mathbf{B}_i}V\|. 
\end{align}
Utilizing the preceding identities and estimates in \eqref{eq:vobsB11}, we finally obtain 
\begin{equation}
	\label{eq:obsB1f}
	\dot{\nu}_{\mathbf{B}_i,obs} \leq -(^{\mathbf{B}_i}\hat{V} -
	{}^{\mathbf{B}_i}V)^T(\mathbf{L}_{\mathbf{B}_i} - M_{c,i}M_{v,i}I_{6\times 
	6})(^{\mathbf{B}_i}\hat{V} -
	{}^{\mathbf{B}_i}V)
\end{equation}
which can be made negative by choosing  $\mathbf{L}_{\mathbf{B}_i}
> M_{c,i}M_{v,i}I_{6\times 6}$. We will fix the choice for
$\mathbf{L}_{\mathbf{B}_i}$ later when designing the combined
controller-observer in Section \ref{sec:ctrl}.

\subsection{Observer for Joint $i$} \label{obs:joint}

Similarly as in the case of link $i$, we design a velocity observer for an arbitrary joint $i$, namely
\begin{subequations}
	\label{eq:obsJ1}%
	\begin{align}
		\dot{\hat{q}}_i & = z_i - L_i(\hat{q}_i - q_i) \\
		I_m\dot{z}_i & = \tau_i - \tau_{ai} - f_{c,i}(\dot{\hat{q}}_i) - \ell_i(\hat{q}_i-q_i) 
		\label{eq:obsJ1b} 
	\end{align} 
\end{subequations}
where $[\hat{q}_i \quad z_i]^T$ is the observer state, $L_i,\ell_i > 0$ are
gain parameters and $\dot{\hat{q}}_i$ is the observed (angular)
velocity. Unlike the observer for link $i$, the proposed observer also
contains a position error feedback term which is added to achieve
position convergence in addition to velocity convergence.

Before computing the error dynamics, we set $L_i = \ell_i +
I_{m,i}^{-1}$. Now, subtracting \eqref{eq:tau_1} from
\eqref{eq:obsJ1b}, we obtain error dynamics
\begin{align}
	I_{m,i}(\ddot{\hat{q}}_i - \ddot{q}_i) & =
	-[f_{c,i}(\dot{\hat{q}}_i) -
	f_{c,i}(\dot{q}_i)]
	- (I_{m,i}\ell_i + 1)(\dot{\hat{q}}_i - \dot{q}_i) - \ell_i(\hat{q}_i-q_i),
\end{align}
which by defining a new variable $s_i := (\dot{\hat{q}}_i - \dot{q}_i)
+ \ell_i(\hat{q}_i-q_i)$ can be equivalently written as
$
I_{m,i} \dot{s}_i =
-[f_{c,i}(\dot{\hat{q}}_i) -
f_{c,i}(\dot{q}_i)] - s_i.
$
Let us now define a quadratic function $\nu_{i,obs}$ as
\begin{equation}
	\label{eq:vobsJ1}
	\nu_{i,obs} := \frac{I_{m,i}}{2}(\dot{\hat{q}}_i - \dot{q}_i)^2 +
	\frac{\ell_i}{2}(\hat{q}_i - q_i)^2 + \frac{I_{m,i}}{2}s_i^2.
\end{equation}
Then, denoting the Lipschitz constant of $f_{c,i}$ by $m_{c,i}$, we obtain
\begin{align}
	\dot{\nu}_{i,obs} & = -(\dot{\hat{q}}_i - \dot{q}_i)(f_{c,i}(\dot{\hat{q}}_i)
	- f_{c,i}(\dot{q}_i)) - I_{m,i}L_i(\dot{\hat{q}}_i - \dot{q}_i)^2 \nonumber \\
	& \quad - \ell_i(\dot{\hat{q}}_i - \dot{q}_i)(\hat{q}_i-q_i) +
	\ell_i(\dot{\hat{q}}_i - \dot{q}_i)(\hat{q}_i-q_i) \nonumber \\
	& \quad -s_i^2 - s_i(f_{c,i}(\dot{\hat{q}}_i) - f_{c,i}(\dot{q}_i)) \nonumber \\
	& \leq - I_{m,i}L_i(\dot{\hat{q}}_i - \dot{q}_i)^2 - s_i^2 -
	s_i(f_{c,i}(\dot{\hat{q}}_i) - f_{c,i}(\dot{q}_i)) \nonumber \\
	& \leq - I_{m,i}L_i(\dot{\hat{q}}_i - \dot{q}_i)^2 - s_i^2 +
	\frac{s_i^2}{2} + \frac{1}{2}(f_{c,i}(\dot{\hat{q}}_i) - f_{c,i}(\dot{q}_i))^2
	\nonumber \\
	& \leq - I_{m,i}L_i(\dot{\hat{q}}_i - \dot{q}_i)^2 - s_i^2 +
	\frac{s_i^2}{2} + \frac{m^2_{c,i}}{2}(\dot{\hat{q}}_i-\dot{q}_i)^2 \nonumber \\
	& = - \left(I_{m,i}L_i - \frac{m^2_{c,i}}{2}\right)(\dot{\hat{q}}_i - \dot{q}_i)^2 - \frac{1}{2}s_i^2, 
	\label{eq:vobsJ1d}
\end{align}
the right-hand-side of which is negative for all $L_i >
\frac{1}{2}m^2_{c,i}I_{m,i}^{-1}$. We will fix the
choice of $L_i$ (or rather $\ell_i$) as part of the joint
controller-observer design in the next section.

\begin{remark} \label{rem:sprop}
	Note that $\ell_i(\hat{q}_i - q_i) = s_i -
	(\dot{\hat{q}}_i-\dot{q}_i)$ so that $0 \leq \ell_i(\hat{q}_i - q_i)^2
	\leq 2\ell_i^{-1}s_i^2 + 2\ell_i^{-1}(\dot{\hat{q}}_i -
	\dot{q}_i)^2$.
\end{remark}

\section{Control with Observers}
\label{sec:ctrl}

In order to achieve position control for the system, let us introduce the
concept of \textit{required joint $i$ velocity} as $\dot{q}_{ir} =
\dot{q}_{id} + \lambda_i(q_{id} - q_i)$,
where $q_{id}$ is the traditionally used desired position trajectory (see \cite{siciliano2010robotics} 
for $q_{id}$) for joint $i$ and
$\lambda_i > 0$ is a control parameter
\cite[Sect. 3.3.6]{ZhuBook}. However, because we later need to be able
to realize $\ddot{q}_{ir}$ for the control, we redefine the required
velocity according to \cite[Sect. III.A]{BerNij93} as
\begin{equation}
	\dot{q}_{ir} := \dot{q}_{id} + \lambda_i({q}_{id} -
	{\hat{q}}_{i}) \label{eq:reqq},
\end{equation}
where we use the observed position $\hat{q}_i$ in place of $q_i$.

\subsection{Control of Link $i$}
\label{sec:link_ctrl}

In line with \eqref{eq:B1V}, the required linear/angular velocity vector at link $i$ base frame 
$\{\mathbf{B}_i\}$ can be obtained as
\begin{equation}
	\label{eq:BiVr}
	{}^{\mathbf{B}_i}V_r = \mathbf{z}_{\tau}\dot{q}_{ir} +
	{}^{\mathbf{B}_{i-1}}\mathbf{U}_{\mathbf{B}_i}^T{}^{\mathbf{B}_{i-1}}V_r
	\quad i \in \left\{ 1,2,\ldots,n \right\}, 
\end{equation}
and, in line with \eqref{eq:T1V}, the following relation holds for transforming the required 
linear/angular velocity vectors in link $i$:
\begin{equation} {}^{\mathbf{T}_i}V_r =
	{}^{\mathbf{B}_i}{\mathbf{U}}^T_{{\mathbf{T}_i}}{}^{\mathbf{B}_i}V_r,
	\quad i \in \left\{ 1,2,\ldots,n \right\} \label{eq:T1Vr} 
\end{equation}
with $^{\mathbf{B}_0}V_r = \mathbf{0}$ in \eqref{eq:BiVr} and \eqref{eq:T1Vr}.
Then, in view of \eqref{eq:B1F*} and using \eqref{eq:BiVr}, the required net
force/moment vector for link $i$ can be written as
\begin{equation}
		{}^{{\mathbf B}_i}F^*_{r}
		= {\mathbf M}_{{\mathbf B}_i}\frac{d}{dt}({}^{{\mathbf
				B}_i}{V_{r}})+{\mathbf C}_{{\mathbf B}_i}({}^{{\mathbf
				B}_i}{\hat{\omega}}){}^{{\mathbf B}_i}{V_{r}}+{\mathbf G}_{{\mathbf
				B}_i}  + {\mathbf K}_{{\mathbf B}_i}({}^{{{\mathbf
					B}_i}}V_{r} - {}^{{{\mathbf B}_i}}\hat{V}), \quad i \in \left\{ 1,2,\ldots,n \right\} 
	\label{eq:B1F*r}
\end{equation}
where ${\mathbf K}_{{\mathbf B}_i}  > 0$ is a velocity gain
matrix. Note that we need to use the observed velocities in the matrix
$\mathbf{C}_{\mathbf{B}_i}$ and in the feedback term. 
Finally, the required force/mo- ment vector can be written by reusing
\eqref{eq:B1F} as
\begin{equation}
	{}^{\mathbf{B}_i}F_{r} = {}^{\mathbf{B}_i}F^*_{r} +
	{}^{\mathbf{B}_i}{\mathbf{U}}_{{\mathbf{T}_i}}{}^{\mathbf{T}_i}F_{
		r}, \quad i \in \{1,2,\ldots,n\}. \label{eq:B1Fr}
\end{equation}

Let us define a quadratic function $\nu_{{\mathbf B}_{i,ctrl}}$ for
link $i$ as
\begin{equation}
	\nu_{{\mathbf B}_{i,ctrl}} := \frac{1}{2}({}^{{\mathbf
			B}_{i}}V_{r} - {}^{{\mathbf B}_{i}}V)^T{{\mathbf
			M}_{{\mathbf B}_{i}}}({}^{{\mathbf B}_{i}}V_{r} -
	{}^{{\mathbf B}_{i}}V). \label{eq:nu_B1c}
\end{equation}
Motivated by the discussion in \cite[Sect. II.C]{BerNij93}, we define
a quadratic function $\nu_{\mathbf{B}_i}$ for link $i$ as
\begin{equation}
	\label{eq:nuB1t}
	\nu_{\mathbf{B}_i} := \nu_{\mathbf{B}_i,ctrl} + \nu_{\mathbf{B}_i,obs},
\end{equation}
where $\nu_{\mathbf{B}_1,obs}$ is given
in \eqref{eq:vb1obs}. The following lemma provides an auxiliary result
that will be utilized in Section \ref{sec:stability} when proving
exponential convergences of the observation and control for the whole
$n$-DoF system.
\begin{lemma} \label{lem:link1}
	If the controller gain $\mathbf{K}_{\mathbf{B}_i}$ in
	\eqref{eq:B1F*r} is chosen such that $\mathbf{K}_{\mathbf{B}_i} >
	I_{6\times 6}$ and the observer gain $\mathbf{L}_{\mathbf{B}_i}$ in \eqref{eq:obsl1}
	is chosen such that $\mathbf{L}_{\mathbf{B}_i} > M_{c,i}M_{v,i} \left(1 +
	\frac{1}{2}M_{c,i}M_{v,i} \right)I_{6\times 6} \\ +
	\frac{1}{2}\mathbf{K}_{\mathbf{B}_i}$, then there exist some
	$M_{i,1}, M_{i,2} > 0$ such that $\nu_{\mathbf{B}_i}$
	in \eqref{eq:nuB1t} satisfies
	\begin{equation}
		\begin{split}
			\dot{\nu}_{{\mathbf B}_{i}} & \leq -({}^{{\mathbf B}_{i}}V_r -
			{}^{{\mathbf B}_{i}}V)^T{M_{i,1}}({}^{{\mathbf B}_{i}}V_{r} -
			{}^{{\mathbf B}_i}V) \\
			& \quad -(^{\mathbf{B}_i}\hat{V} -
			{}^{\mathbf{B}_i}V)^TM_{i,2}(^{\mathbf{B}_i}\hat{V} - {}^{\mathbf{B}_i}V) 
			+ p_{{\mathbf B}_{i}} - p_{{\mathbf T}_{i}}  \label{eq:nu_B1_dot}
		\end{split}
	\end{equation}
	for all $i \in \{1,2,\ldots,n\}$. 
\end{lemma}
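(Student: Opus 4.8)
The plan is to differentiate the two constituents of \eqref{eq:nuB1t} separately and recombine them. The estimate for $\dot{\nu}_{\mathbf{B}_i,obs}$ is already available in \eqref{eq:obsB1f}, so the work is in the control part. Writing $e_c := {}^{\mathbf{B}_i}V_r - {}^{\mathbf{B}_i}V$ and $e_o := {}^{\mathbf{B}_i}\hat{V} - {}^{\mathbf{B}_i}V$ for brevity, I would first subtract the link dynamics \eqref{eq:B1F*} from the required net force \eqref{eq:B1F*r} to obtain the control error dynamics
\begin{equation*}
\mathbf{M}_{\mathbf{B}_i}\dot{e}_c = -\big[\mathbf{C}_{\mathbf{B}_i}({}^{\mathbf{B}_i}\hat{\omega}){}^{\mathbf{B}_i}V_r - \mathbf{C}_{\mathbf{B}_i}({}^{\mathbf{B}_i}\omega){}^{\mathbf{B}_i}V\big] - \mathbf{K}_{\mathbf{B}_i}({}^{\mathbf{B}_i}V_r - {}^{\mathbf{B}_i}\hat{V}) + ({}^{\mathbf{B}_i}F^*_r - {}^{\mathbf{B}_i}F^*).
\end{equation*}
Since $\mathbf{M}_{\mathbf{B}_i}$ is constant in the body frame (consistent with the treatment of the observer term in \eqref{eq:vobsB11}), we have $\dot{\nu}_{\mathbf{B}_i,ctrl} = e_c^T\mathbf{M}_{\mathbf{B}_i}\dot{e}_c$.

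Three manipulations then reduce $e_c^T\mathbf{M}_{\mathbf{B}_i}\dot{e}_c$ to a tractable form. First, for the Coriolis term I would split it exactly as in \eqref{eq:Cprops}, using linearity \eqref{eq:Cp2} and skew-symmetry \eqref{eq:Cp1} so that the quadratic piece $e_c^T\mathbf{C}_{\mathbf{B}_i}({}^{\mathbf{B}_i}\hat{\omega})e_c$ vanishes and only the cross term $\pm\,e_c^T\mathbf{C}_{\mathbf{B}_i}({}^{\mathbf{B}_i}\hat{\omega} - {}^{\mathbf{B}_i}\omega){}^{\mathbf{B}_i}V$ survives; by \eqref{eq:Cp3} and the velocity bound $M_{v,i}$ this is bounded in magnitude by $M_{c,i}M_{v,i}\|e_c\|\,\|e_o\|$, since ${}^{\mathbf{B}_i}\hat{\omega} - {}^{\mathbf{B}_i}\omega$ is a subvector of $e_o$. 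Second, writing ${}^{\mathbf{B}_i}V_r - {}^{\mathbf{B}_i}\hat{V} = e_c - e_o$ turns the feedback term into $-e_c^T\mathbf{K}_{\mathbf{B}_i}e_c + e_c^T\mathbf{K}_{\mathbf{B}_i}e_o$.

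The third and most delicate step is extracting the virtual power flows from the force term $e_c^T({}^{\mathbf{B}_i}F^*_r - {}^{\mathbf{B}_i}F^*)$. Substituting \eqref{eq:B1F} and \eqref{eq:B1Fr} gives ${}^{\mathbf{B}_i}F^*_r - {}^{\mathbf{B}_i}F^* = ({}^{\mathbf{B}_i}F_r - {}^{\mathbf{B}_i}F) - {}^{\mathbf{B}_i}\mathbf{U}_{\mathbf{T}_i}({}^{\mathbf{T}_i}F_r - {}^{\mathbf{T}_i}F)$; the first piece is exactly $p_{\mathbf{B}_i}$ by \eqref{eq:VPF}, and for the second I would use \eqref{eq:T1V} and \eqref{eq:T1Vr} to rewrite $e_c^T\,{}^{\mathbf{B}_i}\mathbf{U}_{\mathbf{T}_i} = ({}^{\mathbf{T}_i}V_r - {}^{\mathbf{T}_i}V)^T$, which yields precisely $-p_{\mathbf{T}_i}$. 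This is where the VDC transformation structure does the real work, and keeping the frames and transposes straight is the main place an error could creep in.

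Finally I would add $\dot{\nu}_{\mathbf{B}_i,obs}$ from \eqref{eq:obsB1f} and close the two remaining indefinite cross terms by Young's inequality, bounding $M_{c,i}M_{v,i}\|e_c\|\,\|e_o\| \le \tfrac12\|e_c\|^2 + \tfrac12 M_{c,i}^2M_{v,i}^2\|e_o\|^2$ and $e_c^T\mathbf{K}_{\mathbf{B}_i}e_o \le \tfrac12 e_c^T\mathbf{K}_{\mathbf{B}_i}e_c + \tfrac12 e_o^T\mathbf{K}_{\mathbf{B}_i}e_o$ (the latter valid since $\mathbf{K}_{\mathbf{B}_i}>0$). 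Collecting the $e_c$ terms gives the coefficient $M_{i,1} = \tfrac12\mathbf{K}_{\mathbf{B}_i} - \tfrac12 I_{6\times6}$, positive definite exactly under $\mathbf{K}_{\mathbf{B}_i} > I_{6\times6}$, while collecting the $e_o$ terms gives $M_{i,2} = \mathbf{L}_{\mathbf{B}_i} - M_{c,i}M_{v,i}(1 + \tfrac12 M_{c,i}M_{v,i})I_{6\times6} - \tfrac12\mathbf{K}_{\mathbf{B}_i}$, positive definite exactly under the stated bound on $\mathbf{L}_{\mathbf{B}_i}$. The bookkeeping is routine, but the particular splitting in Young's inequality must be chosen as above so that the constants reproduce the precise gain conditions in the statement; the force-term identity is the genuinely structural step.
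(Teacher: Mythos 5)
Your proposal is correct and follows essentially the same route as the paper's own proof in Appendix A: the same error dynamics from subtracting \eqref{eq:B1F*} from \eqref{eq:B1F*r}, the same skew-symmetry/linearity reduction of the Coriolis term to the cross term $e_c^T\mathbf{C}_{\mathbf{B}_i}({}^{\mathbf{B}_i}\hat{\omega}-{}^{\mathbf{B}_i}\omega){}^{\mathbf{B}_i}V$, the same extraction of $p_{\mathbf{B}_i}-p_{\mathbf{T}_i}$ via \eqref{eq:T1V}, \eqref{eq:B1F}, \eqref{eq:T1Vr}, \eqref{eq:B1Fr}, and the same Young-inequality splittings yielding $M_{i,1}=\tfrac12(\mathbf{K}_{\mathbf{B}_i}-I_{6\times6})$ and $M_{i,2}=\mathbf{L}_{\mathbf{B}_i}-M_{c,i}M_{v,i}(1+\tfrac12 M_{c,i}M_{v,i})I_{6\times6}-\tfrac12\mathbf{K}_{\mathbf{B}_i}$. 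Nothing is missing; the constants and gain conditions you obtain coincide exactly with those in the paper.
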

\begin{proof} See Appendix \ref{proof:link1}.
\end{proof}

\subsection{Control of Joint $i$}
\label{sec:joint_ctrl}

We already saw in the previous section that, in line with \eqref{eq:B1V}, the required joint and link 
velocities are related by \eqref{eq:BiVr}. Alternatively, in line with 
\eqref{eq:B1Valt} we can write
\begin{equation}
	\label{eq:B1Vralt}
	^{\mathbf{B}_i}V_r = \mathbf{z}_{\tau} \dot{q}_{ir} +
	{}^{\mathbf{T}_{i-1}}U_{\mathbf{B}_i}^T{}^{\mathbf{T}_{i-1}}V_r,
	\quad i \in \left\{ 2,3,\ldots,n \right\}. 
\end{equation}
Then, in view of \eqref{eq:tau_a1}--\eqref{eq:tau_1}, the control law
for joint $i$ can be written as 
\begin{subequations}
	\label{eq:jointc}%
	\begin{align}
		\tau_{air} &= \mathbf{z}^T_\tau{}^{\mathbf{B}_i}F_{
			r} \\
		\tau_i &= I_{m,i}\ddot{q}_{ir} +
		f_{c,i}(\dot{q}_{ir}) + \tau_{air} +
		k_i(\dot{q}_{ir} - \dot{\hat{q}}_{i}) \label{eq:jc2} \\
		^{\mathbf{B}_0}F_r & = {}^{\mathbf{B}_0}U_{\mathbf{B}_1} {}^{\mathbf{B}_1}F_r
	\end{align}
\end{subequations}
where $k_i > 0$ is a velocity feedback gain. Similar to Lemma
\ref{lem:link1}, we have the following auxiliary result:
\begin{lemma} \label{lem:joint1}
	For all $k_i > 0$ in  \eqref{eq:jointc}, if the observer gains
	$\ell_i$ and $L_i$ in \eqref{eq:obsJ1} are chosen such that $2I_{m,i}L_i >
	\max\{2, m_{c,i}^2 + k_i\}$ and $\ell_i = L_i -
	I_{m,i}^{-1} > 0$, then there exists some $m_i > 0$ such that the quadratic function
	\begin{equation}
		\nu_{ai} := \frac{I_{m,i}}{2}(\dot{q}_{ir} -
		\dot{q}_{i})^2 +
		\frac{I_{m,i}}{2}(\dot{\hat{q}}_i-\dot{q}_i)^2 +
		\frac{\ell_i}{2}(\hat{q}_i-q_i)^2 + \frac{I_{m,i}}{2}s_i^2,
		\label{eq:nu_a1}
	\end{equation} 
	where $s_i = (\dot{\hat{q}}_i + \dot{q}_i)
	+ \ell_i(\hat{q}_i-q_i)$, satisfies
	\begin{equation}
		\dot{\nu}_{ai} \leq -\frac{1}{2}k_i(\dot{q}_{ir} -
		\dot{q}_i)^2 -m_i(\dot{\hat{q}}_i-\dot{q}_i)^2  -\frac{1}{2}s_i^2  + p_{{\mathbf A}_{i-1}} - 
		p_{{\mathbf
				B}_{i}}
		\label{eq:nu_a1_dot}
	\end{equation}
	for all $i \in \{1,2,\ldots,n\}$,
	where we denote $p_{{\mathbf A}_{0}} = p_{{\mathbf B}_{0}}$ and
	$p_{{\mathbf A}_{i}} = p_{{\mathbf T}_{i}}$ for $i\in \{1,2,\ldots,\\n-1\}$.
\end{lemma}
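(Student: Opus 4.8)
The plan is to exploit the additive structure $\nu_{ai} = \nu_{ai,ctrl} + \nu_{i,obs}$, where $\nu_{ai,ctrl} := \frac{I_{m,i}}{2}(\dot{q}_{ir} - \dot{q}_i)^2$ and $\nu_{i,obs}$ is exactly the observer function \eqref{eq:vobsJ1}, with $s_i = (\dot{\hat{q}}_i - \dot{q}_i) + \ell_i(\hat{q}_i - q_i)$ as introduced in Section \ref{obs:joint}. Since $\dot{\nu}_{i,obs}$ has already been estimated in \eqref{eq:vobsJ1d}, the only genuinely new computation is a bound for $\dot{\nu}_{ai,ctrl}$; the two estimates are then added and the gain inequalities are used to sign the resulting coefficients.

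First I would compute the control error dynamics. Subtracting the joint model \eqref{eq:tau_1} from the control law \eqref{eq:jc2} and solving for the acceleration error gives
\[
I_{m,i}(\ddot{q}_{ir} - \ddot{q}_i) = -[f_{c,i}(\dot{q}_{ir}) - f_{c,i}(\dot{q}_i)] - (\tau_{air} - \tau_{ai}) - k_i(\dot{q}_{ir} - \dot{\hat{q}}_i).
\]
Differentiating $\nu_{ai,ctrl}$ and inserting this identity expresses $\dot{\nu}_{ai,ctrl}$ as $(\dot{q}_{ir} - \dot{q}_i)$ multiplied by the right-hand side, leaving three terms: a friction term, the actuation-coupling term involving $\tau_{air} - \tau_{ai}$, and the velocity-feedback term.

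Each term is then handled separately. The friction term $-(\dot{q}_{ir} - \dot{q}_i)[f_{c,i}(\dot{q}_{ir}) - f_{c,i}(\dot{q}_i)]$ is nonpositive by monotonicity \eqref{eq:frprop1} and is simply discarded. For the feedback term I would split $\dot{q}_{ir} - \dot{\hat{q}}_i = (\dot{q}_{ir} - \dot{q}_i) - (\dot{\hat{q}}_i - \dot{q}_i)$ and bound the resulting cross term by Young's inequality, producing $-\frac{k_i}{2}(\dot{q}_{ir} - \dot{q}_i)^2 + \frac{k_i}{2}(\dot{\hat{q}}_i - \dot{q}_i)^2$; the extra $\frac{k_i}{2}(\dot{\hat{q}}_i - \dot{q}_i)^2$ is the penalty for feeding back the observed velocity rather than the true one, and it will be absorbed by the observer estimate. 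The step I expect to be the crux is the coupling term $-(\dot{q}_{ir} - \dot{q}_i)(\tau_{air} - \tau_{ai})$. Writing $\tau_{air} - \tau_{ai} = \mathbf{z}_\tau^T({}^{\mathbf{B}_i}F_r - {}^{\mathbf{B}_i}F)$ via \eqref{eq:tau_a1} and \eqref{eq:jointc}, and using the difference of the kinematic relations \eqref{eq:B1Vralt}/\eqref{eq:B1Valt} in the form $\mathbf{z}_\tau(\dot{q}_{ir} - \dot{q}_i) = ({}^{\mathbf{B}_i}V_r - {}^{\mathbf{B}_i}V) - {}^{\mathbf{T}_{i-1}}\mathbf{U}_{\mathbf{B}_i}^T({}^{\mathbf{T}_{i-1}}V_r - {}^{\mathbf{T}_{i-1}}V)$ together with the force transformation \eqref{eq:frametrans}, this term collapses to $p_{\mathbf{T}_{i-1}} - p_{\mathbf{B}_i}$ by the VPF definition \eqref{eq:VPF}. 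The case $i = 1$ must be treated separately using \eqref{eq:BiVr} with ${}^{\mathbf{B}_0}V = {}^{\mathbf{B}_0}V_r = \mathbf{0}$, which gives $p_{\mathbf{B}_0} = 0$ and hence the coupling term $p_{\mathbf{A}_0} - p_{\mathbf{B}_1}$, consistent with the stated convention $p_{\mathbf{A}_0} = p_{\mathbf{B}_0}$ and $p_{\mathbf{A}_{i-1}} = p_{\mathbf{T}_{i-1}}$ for $i \geq 2$.

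Collecting, the control bound reads $\dot{\nu}_{ai,ctrl} \leq -\frac{k_i}{2}(\dot{q}_{ir} - \dot{q}_i)^2 + \frac{k_i}{2}(\dot{\hat{q}}_i - \dot{q}_i)^2 + p_{\mathbf{A}_{i-1}} - p_{\mathbf{B}_i}$. Adding the observer estimate \eqref{eq:vobsJ1d} makes the coefficient of $(\dot{\hat{q}}_i - \dot{q}_i)^2$ equal to $-\big(I_{m,i}L_i - \tfrac{1}{2}m_{c,i}^2 - \tfrac{1}{2}k_i\big)$, while the $-\frac{1}{2}s_i^2$ and $-\frac{k_i}{2}(\dot{q}_{ir} - \dot{q}_i)^2$ terms survive unchanged. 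Setting $m_i := I_{m,i}L_i - \tfrac{1}{2}m_{c,i}^2 - \tfrac{1}{2}k_i$, the hypothesis $2I_{m,i}L_i > m_{c,i}^2 + k_i$ yields $m_i > 0$, while the part $2I_{m,i}L_i > 2$ guarantees $\ell_i = L_i - I_{m,i}^{-1} > 0$, so that $\nu_{ai}$ is positive definite and the observer error dynamics of Section \ref{obs:joint} remain valid. This produces exactly \eqref{eq:nu_a1_dot}, completing the argument.
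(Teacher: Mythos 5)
Your proposal is correct and follows essentially the same route as the paper's own proof: subtract \eqref{eq:tau_1} from \eqref{eq:jc2} to get the error dynamics, discard the friction term by monotonicity \eqref{eq:frprop1}, collapse the coupling term $-(\dot{q}_{ir}-\dot{q}_i)(\tau_{air}-\tau_{ai})$ into $p_{\mathbf{T}_{i-1}} - p_{\mathbf{B}_i}$ (with $i=1$ handled via $p_{\mathbf{B}_0}$), apply Young's inequality to the $k_i(\dot{q}_{ir}-\dot{q}_i)(\dot{\hat{q}}_i-\dot{q}_i)$ cross term, and add the observer estimate \eqref{eq:vobsJ1d}, arriving at the same constant $m_i = I_{m,i}L_i - \tfrac{1}{2}(m_{c,i}^2 + k_i)$. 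You also correctly read $s_i = (\dot{\hat{q}}_i - \dot{q}_i) + \ell_i(\hat{q}_i - q_i)$ as in Section~\ref{obs:joint} (the plus sign in the lemma statement is a typo), so nothing further is needed.
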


\begin{proof} See Appendix \ref{proof:joint1}.
\end{proof}

\section{Stability of the Entire System} \label{sec:stability}

In order to prove that the proposed controller-observer design
achieves tracking of the desired trajectories, recall that the base 
frame $\left\{ \mathbf{B}_0 \right\}$ has zero velocity and that no
external forces\footnote{Constrained motion control (i.e., contacts
	with the environment) can be addressed in VDC with a VPF appearing
	between the manipulator and the environment (see
	\cite{ZhuBook,Koivumaki_TRO2015,Koivumaki_TMECH2016}), but this
	topic is outside the scope of the present study.} are imposed on the
origin of frame $\left\{ 
\mathbf{T}_n \right\}$. Thus, $^{\mathbf{B}_0}V =
{}^{\mathbf{B}_0}V_{r} = \mathbf{0}$ and $^{\mathbf{T}_n}F =
{}^{\mathbf{T}_n}F_r = \mathbf{0}$ so that $p_{{\mathbf B}_{0}} = 
p_{{\mathbf T}_{n}} = 0$. 
Now we can construct a Lyapunov function for the
whole system by summing over the functions $\nu_{\mathbf{B}_i}$ and $\nu_{ai}$, as
the virtual power flows appearing in the time derivatives of the functions
will cancel out in the summation.

In Section \ref{obs:link} we assumed that the link velocities are bounded, but---as shown in the 
proof of Theorem \ref{thm:stability} below---this can be guaranteed by assuming the desired joint
velocities $\dot{q}_{id}$ to be bounded and by restricting to a
suitable set of initial conditions. Thus, we make the following
assumption:
\begin{assumption}
	There exist some $M_{d,i}, M'_{d,i} > 0$ such that $|q_{id}| \leq M_{d,i}$ and $|\dot q_{id}| \leq 
	M'_{d,i}$ for all $t\geq0$ and $i\in \left\{1,2,\ldots,n \right\}$.
	\label{ass:qdbdd}
\end{assumption} 
Moreover, for simplicity we assume that the gain matrices
$\mathbf{L}_{\mathbf{B}_i}, \mathbf{K}_{\mathbf{B}_i}$ are constant
and diagonal, that is:
\begin{assumption} 
	$\mathbf{L}_{\mathbf{B}_i} = L_{\mathbf{B}_i}I_{6\times 6}$ and
	$\mathbf{K}_{\mathbf{B}_i} = K_{\mathbf{B}_i}I_{6\times 6}$ for all $i\in \left\{
	1,2,\ldots,n \right\}$, where  $L_{\mathbf{B}_i}, K_{\mathbf{B}_i} > 0$.
	\label{ass:kldiag}
\end{assumption}
We will now show that the combined observer-control law converges exponentially for all initial 
conditions satisfying
\begin{equation}
	\label{eq:roa}
	\|\mathbf{x}(0)\| < \min_{i\in\{1,2,\ldots,n\}} \left\{
	\sqrt{\frac{\alpha_m}{\alpha_M}}\frac{\frac{\sqrt{1+2L_{\mathbf{B}_i}-K_{\mathbf{B}_i}}-1}{M_{c,i}} 
	\displaystyle
		- \sum_{k=1}^iM_U^kM'_{d,k}}{1 +
		{\displaystyle\sum_{k=1}^iM_U^k}\frac{16\lambda_k}{4\lambda_k - \alpha_M^{-1}}} \right\}
\end{equation}
with $\hat{q}_i(0) = q_{id}(0)$ and $4\lambda_i>\alpha_M^{-1}$ for all
$i\in\{1,2,\ldots,n\}$, where 
\begin{align*}
	\mathbf{x}^T & = \left[({}^{\mathbf{B}_i}V_r-{}^{\mathbf{B}_i}V)^T,
	({}^{\mathbf{B}_i}\hat{V}-{}^{\mathbf{B}_i}V)^T,
	\dot{q}_{ir}-\dot{q}_i, \dot{\hat{q}}_i-\dot{q}_i,s_i\right]_{i=1}^n \\
	\alpha_m & = \min \left\{ \min(\sigma(\mathbf{M}_{\mathbf{B}_i})), I_{m,i}
	\right\}_{i=1}^n \\
	\alpha_M & = \max \left\{\max(\sigma(\mathbf{M}_{\mathbf{B}_i})),
	I_{m,i}+\ell_i^{-1} \right\}_{i=1}^n, \\
	M_U & = \max \left\{ 1,
	\|^{\mathbf{B}_0}\mathbf{U}_{\mathbf{B}_1}\|,
	\|^{\mathbf{B}_1}\mathbf{U}_{\mathbf{B}_2}\|,\ldots,
	\|^{\mathbf{B}_n-1}\mathbf{U}_{\mathbf{B}_n}\|\right\}, 
\end{align*}
where $\sigma(\cdot)$ denotes the set of eigenvalues. Note that the gains 
$\mathbf{L}_{\mathbf{B}_i}, \mathbf{K}_{\mathbf{B}_i}$ and $\lambda_i$ can be assigned 
independently for all $i \in \{1,2,\ldots,n\}$, and that the region characterized by \eqref{eq:roa} can 
be made arbitrarily large by increasing the gains $\mathbf{L}_{\mathbf{B}_i}$ (while keeping the 
other parameters fixed). Thus, the region of attraction is \textit{semiglobal}.

\begin{theorem} \label{thm:stability}
	Under the standing assumptions and for all initial conditions satisfying \eqref{eq:roa}, the 
	combined observer-control law described in  \eqref{eq:obsl1}, \eqref{eq:obsJ1},
	\eqref{eq:reqq}--\eqref{eq:B1Fr} and \eqref{eq:B1Vralt}--\eqref{eq:jointc} with the gains chosen 
	according to
	Lemmas \ref{lem:link1} and \ref{lem:joint1}, the tracking errors
	$q_{id} - q_i$ decay exponentially to zero for all $i \in \left\{
	1,2,\ldots,n \right\}$. 
\end{theorem}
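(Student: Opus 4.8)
The plan is to construct a global Lyapunov function for the entire $n$-DoF system by summing the subsystem Lyapunov functions from Lemmas~\ref{lem:link1} and~\ref{lem:joint1}, exploit the telescoping cancellation of the virtual power flows, and then close the argument by verifying that the boundedness assumption on the link velocities (used in Section~\ref{obs:link}) is self-consistently maintained along trajectories starting in the region~\eqref{eq:roa}. Concretely, I would define
\begin{equation*}
	\nu := \sum_{i=1}^n \left( \nu_{\mathbf{B}_i} + \nu_{ai} \right),
\end{equation*}
where $\nu_{\mathbf{B}_i}$ and $\nu_{ai}$ are as in~\eqref{eq:nuB1t} and~\eqref{eq:nu_a1}. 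Adding the bounds~\eqref{eq:nu_B1_dot} and~\eqref{eq:nu_a1_dot} over all $i$, the VPF terms $p_{\mathbf{B}_i}$, $p_{\mathbf{T}_i}$ and $p_{\mathbf{A}_{i-1}}$ appearing with opposite signs in adjacent subsystems cancel in pairs; the only surviving boundary terms are $p_{\mathbf{B}_0}$ and $p_{\mathbf{T}_n}$, both of which vanish because ${}^{\mathbf{B}_0}V = {}^{\mathbf{B}_0}V_r = \mathbf{0}$ and ${}^{\mathbf{T}_n}F = {}^{\mathbf{T}_n}F_r = \mathbf{0}$. This yields $\dot{\nu} \leq -c\,\|\mathbf{x}\|^2$ for some $c>0$, where $\mathbf{x}$ is the stacked error vector defined before the theorem.

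Next I would establish the quadratic sandwich bounds $\alpha_m\|\mathbf{x}\|^2 \leq \nu \leq \alpha_M\|\mathbf{x}\|^2$ with $\alpha_m,\alpha_M$ as defined in the excerpt, using Remark~\ref{rem:sprop} to control the cross terms involving $\ell_i(\hat q_i - q_i)^2$ in terms of $s_i^2$ and the velocity errors. Combined with $\dot\nu \leq -c\|\mathbf{x}\|^2 \leq -(c/\alpha_M)\nu$, this gives exponential decay $\nu(t) \leq e^{-(c/\alpha_M)t}\nu(0)$, hence $\|\mathbf{x}(t)\| \to 0$ exponentially. Since the tracking error $q_{id}-q_i = (q_{id}-\hat q_i) + (\hat q_i - q_i)$ and the components $\dot q_{ir}-\dot q_i$, $\dot{\hat q}_i - \dot q_i$, $s_i$, together with the relation $\dot q_{ir} = \dot q_{id} + \lambda_i(q_{id}-\hat q_i)$ from~\eqref{eq:reqq}, allow me to recover $q_{id}-\hat q_i$ and $\hat q_i - q_i$ as linear combinations of the decaying error coordinates, exponential decay of $\mathbf{x}$ forces $q_{id}-q_i \to 0$ exponentially.

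\textbf{The main obstacle} is the final, and genuinely delicate, step: Lemma~\ref{lem:link1} was derived under the standing hypothesis $\sup_t \|{}^{\mathbf{B}_i}V\| = M_{v,i} < \infty$, and the admissible observer gain there scales with $M_{v,i}$. To make the argument rigorous rather than circular, I would run a continuation/bootstrap argument: assuming Assumption~\ref{ass:qdbdd} (bounded $q_{id},\dot q_{id}$) and the initial bound~\eqref{eq:roa} with $\hat q_i(0)=q_{id}(0)$, I would show that on any maximal interval where the link velocities stay within the bound implicit in~\eqref{eq:roa}, the Lyapunov estimate gives $\nu$ nonincreasing, so $\|\mathbf{x}(t)\|$ never exceeds $\sqrt{\alpha_M/\alpha_m}\,\|\mathbf{x}(0)\|$. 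I would then propagate this through the kinematic recursion~\eqref{eq:B1V}/\eqref{eq:BiVr}, where the transformation matrices contribute the factors $M_U^k$ and the desired velocities contribute the $\sum_k M_U^k M'_{d,k}$ terms, to bound $\|{}^{\mathbf{B}_i}V\|$ in a way that precisely respects the gain condition $\mathbf{L}_{\mathbf{B}_i} > M_{c,i}M_{v,i}(\cdots)$ underlying Lemma~\ref{lem:link1}. The precise form of~\eqref{eq:roa}---with its nested sums, the factor $\tfrac{16\lambda_k}{4\lambda_k-\alpha_M^{-1}}$, and the $\sqrt{\alpha_m/\alpha_M}$ prefactor---is exactly what is needed to guarantee this invariance, so the crux is verifying that the initial condition set is forward-invariant and that the resulting a priori velocity bound is consistent with the gain choices, thereby removing the standing boundedness assumption and yielding semiglobal exponential convergence.
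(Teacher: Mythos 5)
Your proposal is correct and follows essentially the same route as the paper's proof: summing $\nu_{\mathbf{B}_i}+\nu_{ai}$ with telescoping VPF cancellation, the sandwich bounds via Remark \ref{rem:sprop}, recovery of $q_{id}-q_i$ from $(q_{id}-\hat q_i)+(\hat q_i-q_i)$, and closing the velocity-boundedness loop by propagating the Lyapunov bound through \eqref{eq:reqq} and the kinematic recursion \eqref{eq:BiVr} so that the gain condition of Lemma \ref{lem:link1} holds on the region \eqref{eq:roa}. The only difference is presentational: you frame the last step as an explicit continuation/bootstrap argument (which the paper leaves implicit), while the paper executes the key quantitative step you defer---the variation-of-parameters estimate on $q_{id}-\hat q_i$ driven by the exponentially decaying $\dot q_{ir}-\dot{\hat q}_i$, which is precisely where the factor $\frac{16\lambda_k}{4\lambda_k-\alpha_M^{-1}}$ in \eqref{eq:roa} originates.
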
 
\begin{proof} 
	By Lemmas \ref{lem:link1} and \ref{lem:joint1} and using 
	$p_{{\mathbf B}_{0}} = p_{{\mathbf T}_{n}} = 0$,  the quadratic function
	\begin{equation} \label{eq:nu_tot} 
		\nu := \sum_{i=1}^n (\nu_{\mathbf{B}_i} + \nu_{ai}),
	\end{equation} 
	with $\nu_{\mathbf{B}_i}$ and $\nu_{ai}$ given in
	\eqref{eq:nuB1t} and \eqref{eq:nu_a1}, respectively, satisfies 
	\begin{equation}
		\begin{split} 
			\dot{\nu}
			& = \sum_{i=1}^n (\dot{\nu}_{\mathbf{B}_{i}} +
			\dot{\nu}_{ai})  \\
			& \leq \sum_{i=1}^n \left[-({}^{{\mathbf B}_{i}}V_{
				r} - {}^{{\mathbf B}_{i}}V)^TM_{i,1}({}^{{\mathbf B}_{i}}V_{r} -
			{}^{{\mathbf B}_{i}}V) + p_{\mathbf{B}_i} \right.
			\\ 
			& \quad \qquad -(^{\mathbf{B}_i}\hat{V} -
			{}^{\mathbf{B}_i}V)^TM_{i,2}(^{\mathbf{B}_i}\hat{V} - {}^{\mathbf{B}_i}V)
			- p_{\mathbf{T}_i}    \\
			& \left. \quad \qquad -k_i(\dot{q}_{ir} - \dot{q}_{i})^2 -
			m_i(\dot{\hat{q}}_i - \dot{q}_i)^2 - \frac{1}{2}s_i^2  + p_{\mathbf{T}_i} -
			p_{\mathbf{B}_i} \right]   \\
			& = \sum_{i=1}^n \left[-({}^{{\mathbf B}_{i}}V_{
				r} - {}^{{\mathbf B}_{i}}V)^TM_{i,1}({}^{{\mathbf B}_{i}}V_{r} -
			{}^{{\mathbf B}_{i}}V)
		 -(^{\mathbf{B}_i}\hat{V} -
			{}^{\mathbf{B}_i}V)^TM_{i,2}(^{\mathbf{B}_i}\hat{V} - {}^{\mathbf{B}_i}V) \right.
			\\
			& \quad \qquad \left. -k_i(\dot{q}_{ir} - \dot{q}_{i})^2 -
			m_i(\dot{\hat{q}}_i - \dot{q}_i)^2 - \frac{1}{2}s_i^2 \right].
		\end{split}
		\label{eq:nu_tot_dot}
	\end{equation}
	Using Remark \ref{rem:sprop} and \cite[Thm. 4.10]{KhalBook}, we
	have that $\frac{\alpha_m}{2}\|\mathbf{x}\|^2 \leq \nu
	\leq\frac{\alpha_M}{2}\|\mathbf{x}\|^2$. Moreover, by
	\eqref{eq:nu_tot_dot} and Lemmas \ref{lem:link1} and \ref{lem:joint1}
	we have that $\dot{\nu} \leq -\alpha_p\|\mathbf{x}\|^2$ for $\alpha_p
	= \min \left\{\min(\sigma(M_{i,1})), \min(\sigma(M_{i,2})),k_i,m_i,\frac{1}{2}
	\right\}_{i=1}^n > 0$. However, positivity of $\alpha_p$ requires
	that especially the gain condition in Lemma \ref{lem:link1} holds,
	which under Assumption \ref{ass:kldiag} reduces to
	\begin{equation}
		\label{eq:gcL}
		L_{\mathbf{B}_i} >
		M_{c,i}\|{}^{\mathbf{B}_i}V\|(1 +
		\frac{1}{2}M_{c,i}\|^{\mathbf{B}_i}V\|) +
		\frac{1}{2}K_{\mathbf{B}_i}
	\end{equation}
	for all $i \in \left\{
	1,2,\ldots,n \right\}$ and $t\geq 0$. We will show that this is achieved for all initial conditions 
	satisfying \eqref{eq:roa}.
	
	Let $i\in\{1,2,\ldots,n\}$ be arbitrary. In order to estimate
	$\|{}^{\mathbf{B}_i}V\|$, we begin by 
	subtracting $\dot{\hat{q}}_i$ from both sides of
	\eqref{eq:reqq}. Rearranging terms yields linear dynamics
	$\dot{q}_{id} - \dot{\hat{q}}_i = - \lambda_i(q_{id} - {\hat{q}}_{i})
	+ \dot{q}_{ir} - \dot{\hat{q}}_i$, where by the above Lyapunov
	analysis $|\dot{q}_{ir} - \dot{\hat{q}}_i| \leq |\dot{q}_{ir} -
	\dot{q}_i| + |\dot{\hat{q}}_i - 
	\dot{q}_i| \leq 2
	\sqrt{\frac{\alpha_M}{\alpha_m}}\|\mathbf{x}(0)\|\exp(-\frac{\alpha_p}{2\alpha_M}t)$
	for all $t\geq 0$. Thus, by the variation of parameters formula we obtain
	\begin{equation}
		\label{eq:qidest}
		q_{id} - \hat{q}_i = (q_{id}(0)-\hat{q}_i(0))e^{-\lambda_it} +
		\int\limits_0^te^{-\lambda_i(t-s)}(\dot{q}_{ir} - \dot{\hat{q}}_i)ds
	\end{equation}
	and since by assumption $q_{id}(0) = \hat{q}_i(0)$, we can estimate
	$$
	\sup_{t\geq 0}|q_{id} - \hat{q}_i| \leq 
	\frac{4}{\lambda_i - \frac{\alpha_p}{2\alpha_M}}\sqrt{\frac{\alpha_M}{\alpha_m}}\|\mathbf{x}(0)\|
	$$
	where $\lambda_i > \frac{\alpha_p}{2\alpha_M}$ by assumption. Thus, by \eqref{eq:reqq}
	we obtain
	$$
	\sup_{t\geq 0}|\dot{q}_{ir}| \leq \sup_{t\geq 0}|\dot{q}_{id}| +
	\frac{4\lambda_i}{\lambda_i -
		\frac{\alpha_p}{2\alpha_M}}\sqrt{\frac{\alpha_M}{\alpha_m}}\|\mathbf{x}(0)\| 
	$$
	and consequently by \eqref{eq:BiVr} and Assumption \ref{ass:qdbdd} we obtain
	\begin{align*}
		\sup_{t\geq 0}\|{}^{\mathbf{B}_i}V_r\| 
		& \leq \sum_{k=1}^{i}M_U^k\sup_{t\geq
			0}|\dot{q}_{kr}| \\
		& \leq \sum_{k=1}^{i}M_U^kM'_{d,k}+ \sqrt{\frac{\alpha_M}{\alpha_m}}\|\mathbf{x}(0)
		\|\sum_{k=1}^{i}M_U^k\frac{16\lambda_k}{4\lambda_k -
			\alpha_M^{-1}},
	\end{align*}
	where we also used $\alpha_p \leq \frac{1}{2}$. Finally, we have that
	\begin{align*}
		\sup_{t\geq 0}\|{}^{\mathbf{B}_i}V\|
		& \leq \sup_{t\geq
			0}\|{}^{\mathbf{B}_i}V_r\| + \sup_{t\geq
			0}\|{}^{\mathbf{B}_i}V_r-{}^{\mathbf{B}_i}V\| \\
		& \leq  \sup_{t\geq
			0}\|{}^{\mathbf{B}_i}V_r\| +
		\sqrt{\frac{\alpha_M}{\alpha_m}}\|\mathbf{x}(0)\| \\
		& < \frac{\sqrt{1 + 2L_{\mathbf{B}_i} - K_{\mathbf{B}_i}} - 1}{M_{c,i}}
	\end{align*}
	for all $\mathbf{x}(0)$ satisfying \eqref{eq:roa}, i.e., \eqref{eq:gcL}
	holds.
	
	Now that we have shown that the error dynamics is exponentially stable
	with a given region of attraction, this implies by
	linearity that $\hat{q}_i - q_i = \ell_i^{-1}(s_i - (\dot{\hat{q}}_i -
	\dot{q}_i))$ decay exponentially to zero for all $i \in \left\{
	1,2,\ldots,n \right\}$. Finally, based on \eqref{eq:qidest} we have
	that $q_{id} - q_i  = (q_{id} -
	\hat{q}_i) +  (\hat{q}_i - q_i)$ decay exponentially to zero for all
	$i \in \left\{ 1,2,\ldots,n \right\}$, which concludes the proof. 
\end{proof}

\begin{remark}
	As $^{\mathbf{B}_i}\hat{V} - {}^{\mathbf{B}_i}V\to 0$ for all $i\in\{1,2,\ldots,n\}$ by Theorem 
	\ref{thm:stability}, it follows that $^{\mathbf{B}_i}\hat{P}$ in \eqref{eq:obsl1} converges up to a 
	constant from $^{\mathbf{B}_i}P$ and hence remains bounded by Theorem \ref{thm:stability} and 
	Assumption \ref{ass:qdbdd}.
\end{remark}

\begin{remark} \label{rem:appl}
	Note that as long as position and total torque data is available, the observers are
	in fact independent of the coordinate frames as there are no
	observer-based virtual power flows between neighboring frames. That
	is, as long as we can make the observers stable at the subsystem level, the
	proposed observer design could potentially be incorporated into more general VDC designs 
	\cite[Sect. 4]{ZhuBook} as well. Note also that the present design is not limited to planar joint 
	configuration as the joint orientations can be altered freely by changing the direction vector 
	$\mathbf{z}_\tau$.
\end{remark}

\section{Numerical Simulation of a 2-DoF 
	Robot} \label{sec:sim}

For an example, consider a robot as in Fig.
\ref{fig:system} with two links of length $l_1 = l_2 = 1$. Similar to
\cite[Sect. 2.1]{QuZDawBook}, both links are modeled as point masses
$m_1 = m_2 = 1$ at the distal ends so that the rotational inertia for
both links is $I_1 = I_2 = m_2l_2^2 = 1$.

In line with \eqref{eq:linkdyn}, the link dynamics are given by 
\begin{equation}
	{{\mathbf M}}_{{\mathbf
			B}_i}\frac{d}{dt}({}^{{\mathbf B}_i}V)+{{\mathbf C}}_{{\mathbf
			B}_i}({}^{{\mathbf B}_i}{\omega }){}^{{\mathbf B}_i}V+{{\mathbf
			G}}_{{\mathbf B}_i} = {}^{{\mathbf B}_i}F^*
\end{equation}
where \cite[Sect. 3.4]{ZhuBook} 
\begin{align}
	\mathbf{M}_{\mathbf{B}_i} & = 
	\begin{bmatrix}
		m_i & 0 & 0 \\ 0 & m_i & m_il_i \\ 0 & m_il_i & I_i +
		m_il_i^2
	\end{bmatrix} = 
	\begin{bmatrix}
		1 & 0 & 0 \\ 0 & 1 & 1 \\ 0 & 1 & 2
	\end{bmatrix}, \\
	\mathbf{C}_{\mathbf{B}_i}(\omega) & = 
	\begin{bmatrix}
		0 & -m_i & - m_il_i \\ m_i & 0 & 0 \\ m_il_i & 0 & 0
	\end{bmatrix}\omega = 
	\begin{bmatrix}
		0 & -1 & -1 \\ 1 & 0 & 0 \\ 1 & 0 & 0 
	\end{bmatrix}\omega, 
\end{align}
and $\mathbf{G}_{\mathbf{B}_i} = [0,0,m_ig_il_i\cos(q_i)]^T = [0,0,9.81\times\cos(q_i)]^T$ for $i \in 
\left\{ 1,2 \right\}$. The net forces $^{\mathbf{B}_i}
F^{*}$ are obtained based on \eqref{eq:B1F}, where the
transformation matrices are given in \cite[Sect 3.3.2]{ZhuBook}. The
link dynamics comprise two linear components and one angular
component. For further details on the 2-DoF example, see
\cite[Sect. 3]{ZhuBook}. The joint dynamics are given in line with \eqref{eq:tau_1} by $
I_{m,i}\ddot{q}_i = \tau_i - \tau_{ai} - f_{c,i}(\dot{q}_i)$,
where $I_{m,i} = 0.1$ and $f_{c,i}(\dot{q}_i) = \tanh(\dot{q}_i)$ for $i \in
\left\{ 1,2 \right\}$. Finally, $\tau_i$ is the input torque for
joint $i$ and $\tau_{ai}$ is given by \eqref{eq:tau_a1}.

For the simulation, the joint observer gains in \eqref{eq:obsJ1} are chosen as $\ell_1 =
200$ so that $L_i = 210$ for both joints, and the link observer gains
in \eqref{eq:obsl1} are chosen as 
$\mathbf{L}_{\mathbf{B}_i} = 200\times
I_{3\times 3}$ for both links. The link control gains in
\eqref{eq:B1Fr} are chosen as $\mathbf{K}_{\mathbf{B}_i} = 100\times I_{3\times 3}$ for
both links, and the joint control gains in \eqref{eq:jointc} are
chosen as $k_i  = 10$ for both joints. Moreover, for the required
velocities in \eqref{eq:reqq} the control parameter is chosen as
$\lambda_i = 10$ for both required velocities. The simulations are run
on a Simulink model corresponding to the dynamics presented in the
beginning of this section. 

The desired joint trajectories are given by $q_{1d}(t) = 0.8 - \cos
\left( \frac{\pi}{4}t \right)$ and $q_{2d}(t) = 0.8 - \cos \left(
\frac{\pi}{5}t\right)$.
The desired trajectories and the
joint position trajectories are displayed in Fig.
\ref{fig:pos}, where an initial error can be seen as the joints are
initially at $q_1(0) = q_2(0) = 0$ whereas the desired values are
$-0.2$. However, the initial error diminishes quickly, and thereafter
the joint trajectories follow the desired trajectories accurately.

\begin{figure}[!htbp]
	\centerline{\includegraphics[scale=0.56]{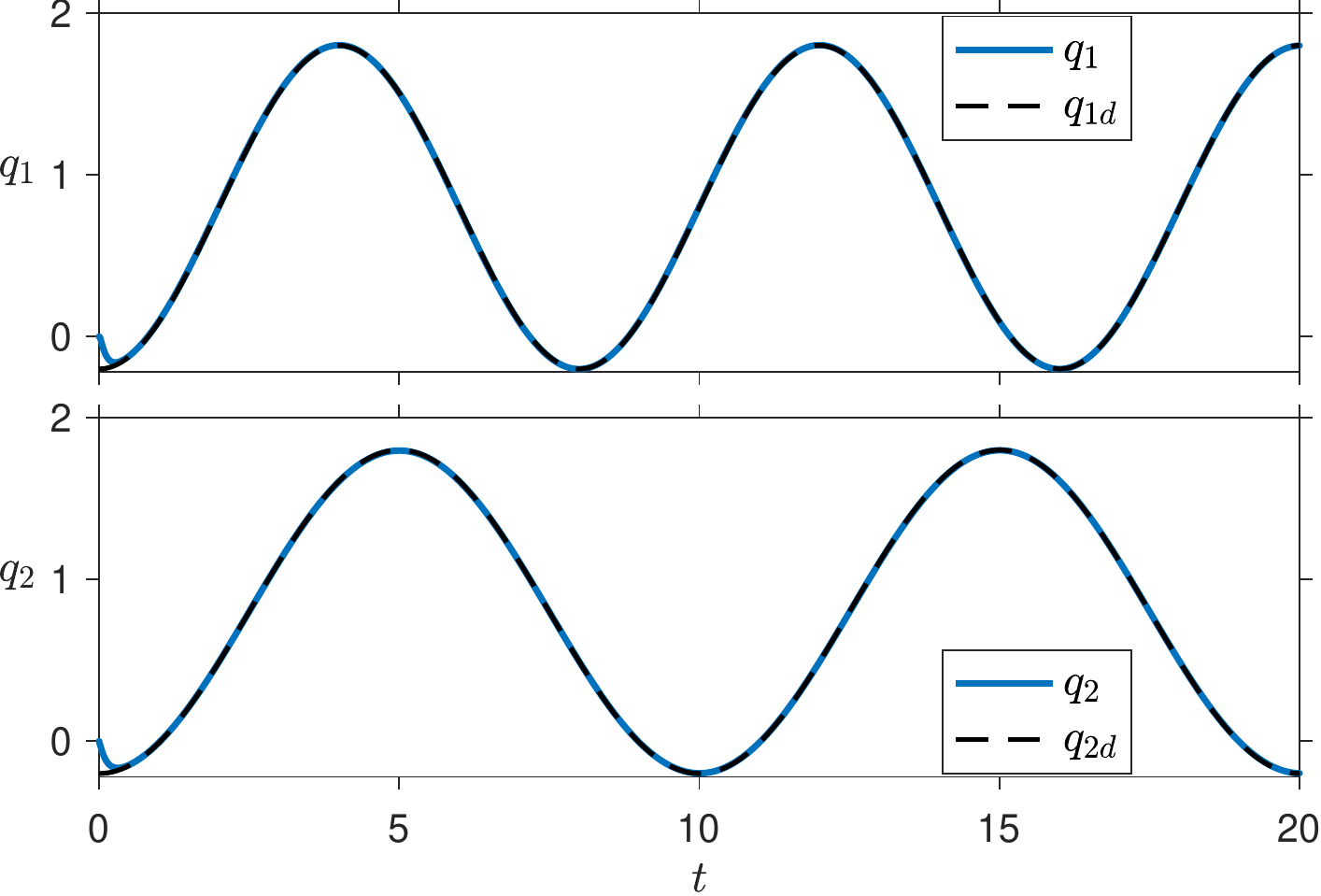}}
	\caption{Joint angle trajectories (in radians) and their desired values.}
	\label{fig:pos}
\end{figure}

Fig. \ref{fig:err} displays the tracking errors $e_i = q_i - q_{id}$ 
for both joints 1 and 2. The tracking errors
behave according to Fig. \ref{fig:pos}, that is, for both joints there is an initial
error of $0.2$ radians which diminishes rapidly, and thereafter the
position errors are virtually zero. The velocity observer errors
$\dot{\hat{q}}_i - \dot{q}_{id}$ are shown in Fig. \ref{fig:errv},
where one can see relatively large initial peaks as the initial
position tracking error is adjusted by the control input, but
thereafter the observed velocities are in accordance with the desired velocities.

\begin{figure}[htbp]
	\centerline{\includegraphics[scale=0.56]{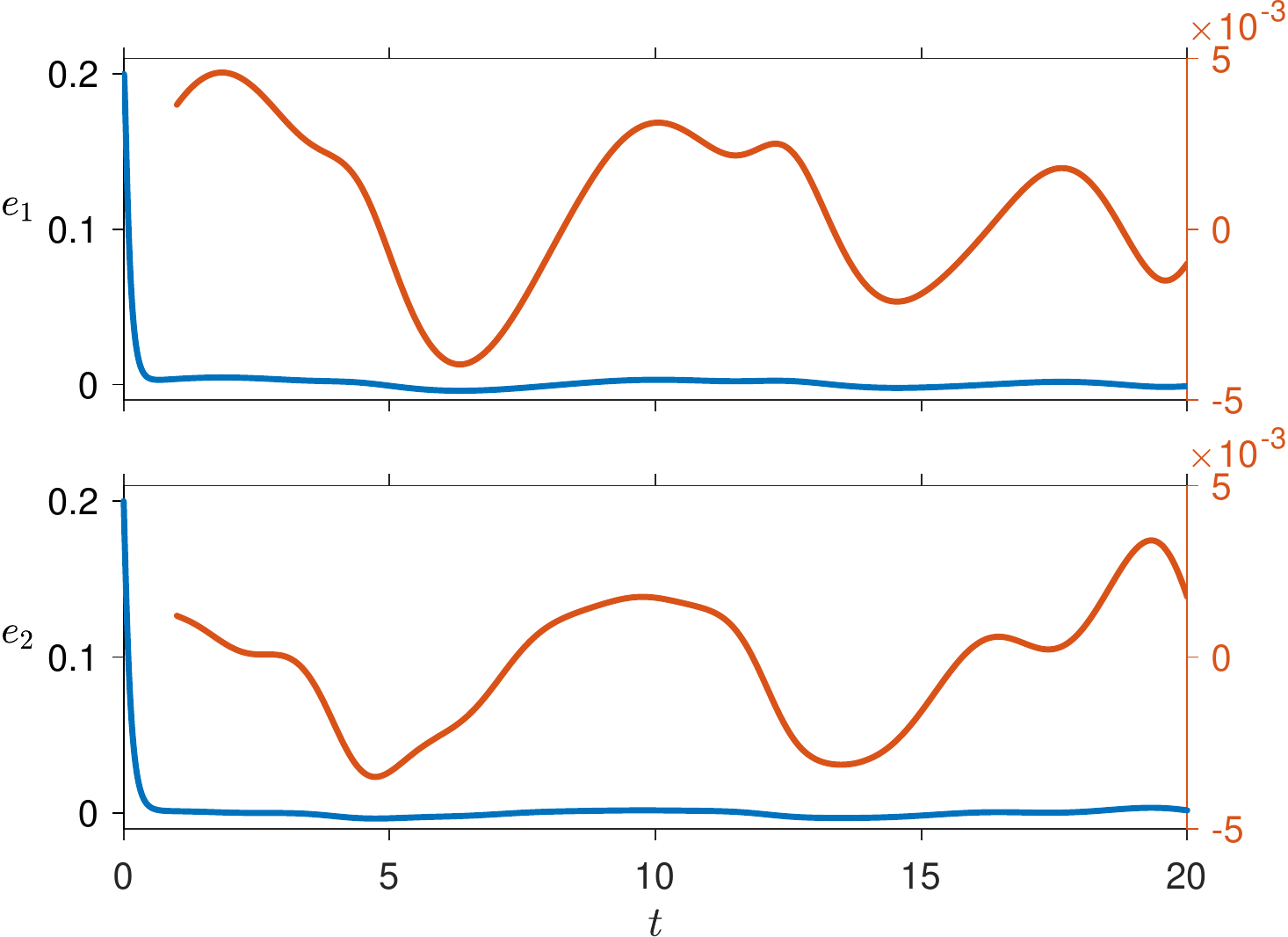}}
	\caption{Tracking errors $e_i = q_i - q_{id}$ (in radians) for joints
		$1$ and $2$. The asymptotic behavior for $t \geq 1$ is depicted in
		detail by the red lines and axes.}
	\label{fig:err}
\end{figure}

\begin{figure}[htbp]
	\centerline{\includegraphics[scale=0.56]{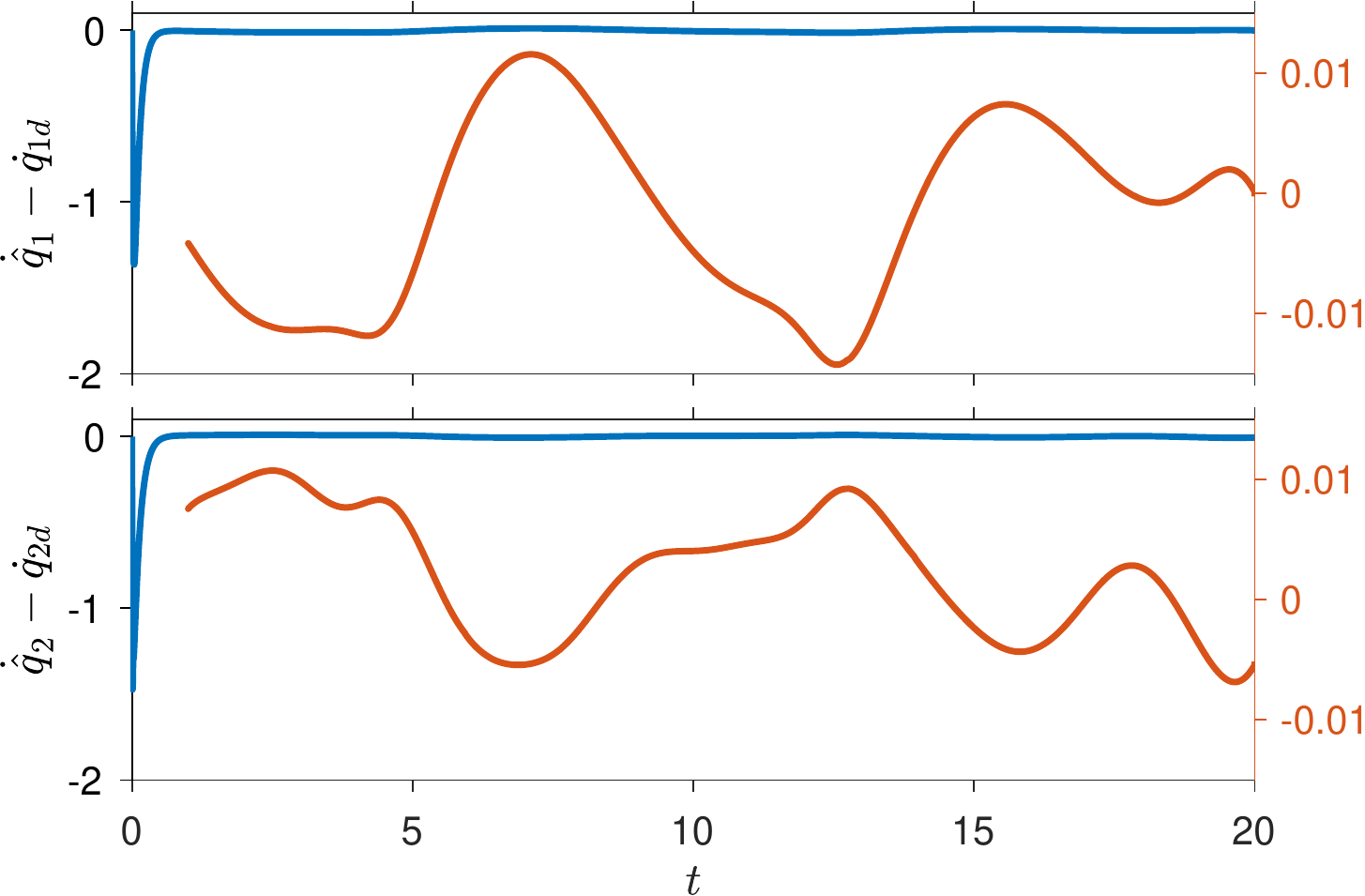}}
	\caption{Observer errors $\dot{\hat{q}}_i - \dot{q}_{id}$ (in
		radians/second) for joints $1$ and $2$. The asymptotic behavior for $t \geq 1$ is depicted in
		detail by the red lines and axes.}
	\label{fig:errv}
\end{figure}

\section{Conclusions} \label{sec:concl}

We incorporated a decentralized velocity observer design in the framework of virtual decomposition 
control of an open chain robotic manipulator. Stability analysis for the proposed controller-observer 
was carried out on a subsystem level by utilizing the concept of virtual stability. The observer error 
dynamics for a single subsystem were found to be independent of the other subsystems, which 
would suggest that the design could be extended to more complex systems as noted in Remark 
\ref{rem:appl}. In addition to proving the semiglobal exponential convergence of the combined 
controller-observer design, the proposed design was demonstrated in a simulation study of a 
$2$-DoF open chain system in the vertical plane. A topic for future research will be to incorporate 
parameter adaptation into the controller-observer design.

\appendix

\section{Proof for Lemma~\ref{lem:link1}}
\label{proof:link1}

First note that by subtracting \eqref{eq:B1F*} from \eqref{eq:B1F*r}, we obtain
\begin{equation}
	\begin{split}
		{}^{{\mathbf B}_{i}}F^{*}_{r} - {}^{{\mathbf
				B}_{i}}F^{*} &= {{\mathbf M}}_{{\mathbf B}_{i}}\frac{d}{dt}({}^{{\mathbf
				B}_{i}}V_{r} - {}^{{\mathbf B}_{i}}V) + {{\mathbf C}}_{{\mathbf
				B}_{i}}({}^{{\mathbf B}_{i}}{\hat{\omega} }){}^{{\mathbf B}_{i}}V_{r}
		\\ 
		& \quad -\mathbf{C}_{\mathbf{B}_i}(^{\mathbf{B}_i}\omega)
		{}^{{\mathbf B}_{i}}V + {{\mathbf
				K}}_{{\mathbf B}_{i}}({}^{{\mathbf B}_{i}}V_{r} - {}^{{\mathbf
				B}_{i}}\hat{V}). 
	\end{split}
	\label{eq:property1}
\end{equation}
Utilizing the properties of ${{\mathbf C}}_{{\mathbf B}_{i}}(\cdot)$
as in Section \ref{sec:obs} and using the
fact that $2V_1^TV_2 \leq \|V_1\|^2 +\|V_2\|^2$, we obtain
\begin{equation}
	\begin{split}
		& ({}^{{\mathbf B}_{i}}V_{r} - {}^{{\mathbf B}_{i}}V)^T[{{\mathbf C}}_{{\mathbf
				B}_{i}}({}^{{\mathbf B}_{i}}{\hat{\omega} }){}^{{\mathbf B}_{i}}V_{r} -
		{{\mathbf C}}_{{\mathbf
				B}_{i}}({}^{{\mathbf B}_{i}}{\omega }){}^{{\mathbf B}_{i}}V] \\
		& = ({}^{{\mathbf B}_{i}}V_{r} - {}^{{\mathbf B}_{i}}V)^T
		\mathbf{C}_{\mathbf{B}_i}(^{\mathbf{B}_i}\hat{\omega} -
		{}^{\mathbf{B}_i}\omega)^{\mathbf{B}_i}V \\
		& \leq \frac{1}{2}\|^{\mathbf{B}_i}V_r-{}^{\mathbf{B}_i}V\|^2
		+ \frac{1}{2}\|{{\mathbf C}}_{{\mathbf B}_{i}}(^{\mathbf{B}_i}\hat{\omega} -
		{}^{\mathbf{B}_i}\omega){}^{\mathbf{B}_i}V\|^2 \\
		& \leq \frac{1}{2}\|^{\mathbf{B}_i}V_r-{}^{\mathbf{B}_i}V\|^2
		+ \frac{1}{2}M_{c,i}^2\|^{\mathbf{B}_i}\hat{V}-{}^{\mathbf{B}_i}V\|^2M_{v,i}^2
	\end{split}
\end{equation}
where we also used the boundedness assumption of $^{\mathbf{B}_i}V$
and the relative boundedness \eqref{eq:Cp3} of $\mathbf{C}_{\mathbf{B}_i}(\cdot)$. Similarly, we 
obtain
\begin{align} 
	& -(^{\mathbf{B}_i}V_r - {}^{\mathbf{B}_i}V)^T
	\mathbf{K}_{\mathbf{B}_i}(^{\mathbf{B}_i}V_r -
	{}^{\mathbf{B}_i}\hat{V}) \nonumber \\
	& \leq -(^{\mathbf{B}_i}V_r - {}^{\mathbf{B}_i}V)^T
	\mathbf{K}_{\mathbf{B}_i} (^{\mathbf{B}_i}V_r - {}^{\mathbf{B}_i}V) \\
	& \qquad + \frac{1}{2}\|\sqrt{\mathbf{K}_{\mathbf{B}_i}}({}^{\mathbf{B}_i}V_r -
	{}^{\mathbf{B}_i}V)\|^2 +
	\frac{1}{2}\|\sqrt{\mathbf{K}_{\mathbf{B}_i}}({}^{\mathbf{B}_i}\hat{V} - 
	{}^{\mathbf{B}_i}V)\|^2. \nonumber
\end{align}

Moreover, using \eqref{eq:T1V}, \eqref{eq:B1F}, \eqref{eq:T1Vr} and
\eqref{eq:B1Fr} we obtain
\begin{align} 
		({}^{{\mathbf B}_{i}}V_{r} - {}^{{\mathbf
				B}_{i}}V)^T({}^{{\mathbf B}_{i}}F^*_{r} - {}^{{\mathbf
				B}_{i}}F^*) & = ({}^{{\mathbf
				B}_{i}}V_{r} - {}^{{\mathbf B}_{i}}V)^T\left[({}^{{\mathbf
				B}_{i}}F_{r} - {}^{{\mathbf B}_{i}}F) -
		{}^{\mathbf{B}_i}{\mathbf{U}}_{{\mathbf{T}_i}}({}^{\mathbf{T}_i}F_{
			r} - {}^{\mathbf{T}_i}F)\right]  \nonumber \\ & =
		p_{{\mathbf B}_{i}} -
		\left[{}^{\mathbf{B}_i}{\mathbf{U}}^T_{{\mathbf{T}_i}}({}^{{\mathbf
				B}_{i}}V_{r} - {}^{{\mathbf
				B}_{i}}V)\right]^T({}^{\mathbf{T}_i}F_{r} - {}^{\mathbf{T}_i}F)  \label{eq:vpfs}
		\\ &= p_{{\mathbf B}_{i}} - ({}^{{\mathbf
				T}_{i}}V_{r} - {}^{{\mathbf T}_{i}}V)^T({}^{\mathbf{T}_i}F_{r} - {}^{\mathbf{T}_i}F) = 
				p_{{\mathbf
				B}_{i}} - p_{{\mathbf T}_{i}}. \nonumber
\end{align}

Using \eqref{eq:property1}--\eqref{eq:vpfs} together with \eqref{eq:obsB1f}, ${\nu}_{{\mathbf 
		B}_{i}}$ in \eqref{eq:nuB1t} satisfies 
\begin{align*}
	\dot{\nu}_{{{\mathbf B}_i}} & =
	\dot{\nu}_{\mathbf{B}_i,ctrl} + \dot{\nu}_{\mathbf{B}_{i,obs}}
	\\ & =
	-({}^{{\mathbf B}_{i}}V_{r} - {}^{{\mathbf
			B}_{i}}V)^T[{{\mathbf C}}_{{\mathbf B}_{i}}({}^{{\mathbf
			B}_{i}}{\hat{\omega} }){}^{{\mathbf B}_{i}}V_{r} -
	\mathbf{C}_{\mathbf{B}_i}({}^{{\mathbf B}_{i}}\omega) {}^{{\mathbf
			B}_{i}}V] \\ & \quad - ({}^{{\mathbf
			B}_{i}}V_{r} - {}^{{\mathbf B}_{i}}V)^T{{\mathbf
			K}}_{{\mathbf B}_{i}}({}^{{\mathbf B}_{i}}V_{r} -
	{}^{{\mathbf B}_{i}}\hat{V}) \\ &\quad +
	({}^{{\mathbf B}_{i}}V_{r} - {}^{{\mathbf
			B}_{i}}V)^T({}^{{\mathbf B}_{i}}F^*_{r} - {}^{{\mathbf
			B}_{i}}F^*) + \dot{\nu}_{\mathbf{B}_{i,obs}}  \\
	&  \leq\frac{1}{2}\|^{\mathbf{B}_i}V_r-{}^{\mathbf{B}_i}V\|^2
	+\frac{1}{2}M_{c,i}^2\|^{\mathbf{B}_i}\hat{V}-{}^{\mathbf{B}_i}V\|^2M_{v,i}^2
	\\
	& \quad -(^{\mathbf{B}_i}V_r - {}^{\mathbf{B}_i}V)^T
	\mathbf{K}_{\mathbf{B}_i} (^{\mathbf{B}_i}V_r -
	{}^{\mathbf{B}_i}V) + p_{{\mathbf B}_{i}} - p_{{\mathbf
			T}_{i}}  \\
	& \qquad + \frac{1}{2}\|\sqrt{\mathbf{K}_{\mathbf{B}_i}}({}^{\mathbf{B}_i}V_r -
	{}^{\mathbf{B}_i}V)\|^2 +
	\frac{1}{2}\|\sqrt{\mathbf{K}_{\mathbf{B}_i}}({}^{\mathbf{B}_i}\hat{V} - 
	{}^{\mathbf{B}_i}V)\|^2 \\
	& \quad - (^{\mathbf{B}_i}\hat{V}-{}^{\mathbf{B}_{i}}V)^T(\mathbf{L}_{\mathbf{B}_i} -
	M_{c,i}M_{v,i}I_{6\times 6})(^{\mathbf{B}_i}\hat{V}-{}^{\mathbf{B}_{i}}V) 
	\\
	& = -(^{\mathbf{B}_i}V_r - {}^{\mathbf{B}_i}V)^T
	\left(\frac{1}{2} \mathbf{K}_{\mathbf{B}_i} -
	\frac{1}{2}I_{6\times 6} \right)(^{\mathbf{B}_i}V_r -
	{}^{\mathbf{B}_i}V) \\
	& \quad -(^{\mathbf{B}_i}\hat{V} -{}^{\mathbf{B}_i}V)^T \\ & \qquad 
	\times \left[\mathbf{L}_{\mathbf{B}_i}- M_{c,i}M_{v,i} \left(1 +
	\frac{1}{2}M_{c,i}M_{v,i}\right)I_{6\times 6} -
	\frac{1}{2} \mathbf{K}_{\mathbf{B}_i}  \right] \\
	& \qquad \times (^{\mathbf{B}_i}\hat{V} - {}^{\mathbf{B}_i}V) + p_{{\mathbf B}_{i}} - p_{{\mathbf
			T}_{i}},
\end{align*}
and the claim follows.

\section{Proof for Lemma~\ref{lem:joint1}}
\label{proof:joint1}

First note that by subtracting \eqref{eq:tau_1} from \eqref{eq:jc2}, we obtain
\begin{equation}
\tau_{air} - \tau_{ai} = -I_{m,i}(\ddot{q}_{ir}
		- \ddot{q}_{i}) - [f_{c,i}(\dot{q}_{ir}) - f_{c,i}(\dot{q}_{i})]  - k_{q,i}(\dot{q}_{ir} -
		\dot{\hat{q}}_{i}). 
	\label{eq:appB_property1}
\end{equation}
Using \eqref{eq:VPF}, \eqref{eq:B1V}, \eqref{eq:tau_a1}, \eqref{eq:B0F},
\eqref{eq:BiVr}, and \eqref{eq:jointc} we obtain for $i = 1$ that 
\begin{equation}
	\begin{split}
		(\dot{q}_{1r} - \dot{q}_1)(\tau_{a1r} - \tau_{a1})  
		& = (\dot{q}_{1r} - \dot{q}_1)
		\mathbf{z}_{\tau}^T(^{\mathbf{B}_1}F_r - {}^{\mathbf{B}_1}F)
		\\
		& = [^{\mathbf{B}_1}V_r - {}^{\mathbf{B}_1}V -
		{}^{\mathbf{B}_0}U_{\mathbf{B}_1}^T(^{\mathbf{B}_0}V_r -
		{}^{\mathbf{B}_0}V)]^T (^{\mathbf{B}_1}F_r - {}^{\mathbf{B}_1}F)
		\\
		& = p_{\mathbf{B}_1} - (^{\mathbf{B}_0}V_r -
		{}^{\mathbf{B}_0}V)^T
		{}^{\mathbf{B}_0}U_{\mathbf{B}_1}(^{\mathbf{B}_1}F_r -
		{}^{\mathbf{B}_1}F)   \\
		& = p_{\mathbf{B}_1} - (^{\mathbf{B}_0}V_r -
		{}^{\mathbf{B}_0}V)^T(^{\mathbf{B}_0}F_r -
		{}^{\mathbf{B}_0}F)  = p_{\mathbf{B}_1} - p_{\mathbf{B}_0}.
	\end{split}
	\label{eq:property21}
\end{equation}
Similarly, using \eqref{eq:VPF}, \eqref{eq:B1Valt}, \eqref{eq:tau_a1}, 
\eqref{eq:B1Vralt} and \eqref{eq:jointc}, we obtain for $i \in \left\{
2,3,\ldots,n \right\}$ that
\begin{align} 
	(\dot{q}_{ir} - \dot{q}_{i})(\tau_{air} -
	\tau_{ai}) \nonumber
	& = (\dot{q}_{ir} - \dot{q}_{i})
	\mathbf{z}^T_\tau({}^{\mathbf{B}_i}F_{
		r} - {}^{\mathbf{B}_i}F) \nonumber \\
	& = \left[({}^{\mathbf{B}_i}V_{r} -
	{}^{\mathbf{B}_i}V) -
	{}^{\mathbf{T}_{i-1}}{\mathbf{U}}^T_{{\mathbf{B}_i}}({}^{\mathbf{T}_{i-1}}V_{
		r} - {}^{\mathbf{T}_{i-1}}V)\right]^T({}^{\mathbf{B}_i}F_{
		r} - {}^{\mathbf{B}_i}F) \nonumber \\
	& = p_{\mathbf{B}_i} - ({}^{\mathbf{T}_{i-1}}V_{
		r} - {}^{\mathbf{T}_{i-1}}V)^T
	{}^{\mathbf{T}_{i-1}}{\mathbf{U}}_{{\mathbf{B}_i}} ({}^{\mathbf{B}_i}F_{
		r} - {}^{\mathbf{B}_i}F)  \label{eq:appB_property2}  \\
	& = p_{\mathbf{B}_i} - ({}^{\mathbf{T}_{i-1}}V_{
		r} - {}^{\mathbf{T}_{i-1}}V)^T({}^{\mathbf{T}_{i-1}}F_{
		r} - {}^{\mathbf{T}_{i-1}}F)
	= p_{\mathbf{B}_i} - p_{\mathbf{T}_{i-1}}.  \nonumber  
\end{align}
Using \eqref{eq:appB_property1}--\eqref{eq:appB_property2} together
with \eqref{eq:vobsJ1d},
$\nu_{ai}$ in \eqref{eq:nu_a1} satisfies
\begin{align*} 
	\dot{\nu}_{ai}  & =
	-k_i(\dot{q}_{ir} - \dot{q}_{i})^2 - [f_{c,i}(\dot{q}_{ir}) - f_{c,i}(\dot{q}_{
		i})](\dot{q}_{ir} - \dot{q}_{i})    \\
	&\quad  - (\dot{q}_{ir} - \dot{q}_{i})(\tau_{air} - \tau_{ai}) +
	k_i(\dot{q}_{ir}-\dot{q}_i)(\dot{\hat{q}}_i - \dot{q}_i) +
	\dot{\nu}_{i,obs}   \\
	& \leq -\frac{1}{2}k_i(\dot{q}_{ir}-\dot{q}_i)^2 
	- \left( I_{m,i}L_i - \frac{m^2_{c,i} + k_i}{2}
	\right)(\dot{\hat{q}}_i - \dot{q}_i)^2   \\
	& \quad   - \frac{1}{2}s_i^2 +p_{{\mathbf T}_{i-1}} - p_{{\mathbf B}_{i}},
\end{align*}
and the claim follows. 

\section*{Acknowledgements} 
The authors wish to thank the anonymous reviewers for their insightful comments and constructive 
suggestions.


\begin{thebibliography}{10}

\bibitem{ZhuBook}
W.-H. Zhu, \emph{Virtual Decomposition Control}.\hskip 1em plus 0.5em minus
0.4em\relax Springer, 2010.

\bibitem{Zhu1997}
W.-H. Zhu, Y.-G. Xi, Z.-J. Zhang, Z.~Bien, and J.~De~Schutter, ``Virtual
decomposition based control for generalized high dimensional robotic systems
with complicated structure,'' \emph{IEEE Trans. Robot. Autom.}, vol.~13,
no.~3, pp. 411--436, 1997.

\bibitem{Zhu1998adaptive}
W.-H. Zhu, Z.~Bien, and J.~De~Schutter, ``Adaptive motion/force~control of
multiple manipulators with joint flexibility based on virtual
decomposition,'' \emph{IEEE Trans. Autom. Control}, vol.~43, no.~1, pp.
46--60, 1998.

\bibitem{Zhu2000}
W.-H. Zhu and S.~E. Salcudean, ``Stability guaranteed teleoperation: an
adaptive motion/force control approach,'' \emph{IEEE Trans. Autom. Control},
vol.~45, no.~11, pp. 1951--1969, 2000.

\bibitem{Zhu2013}
W.-H. Zhu \emph{et~al.}, ``Precision control of modular robot manipulators: The
{VDC} approach with embedded {FPGA},'' \emph{IEEE Trans. on Robotics},
vol.~29, no.~5, pp. 1162--1179, 2013.

\bibitem{Koivumaki_TRO2015}
J.~Koivum{\"a}ki and J.~Mattila, ``Stability-guaranteed force-sensorless
contact force/motion control of heavy-duty hydraulic manipulators,''
\emph{IEEE Trans. Robot.}, vol.~31, no.~4, pp. 918--935, 2015.

\bibitem{Koivumaki_CEP2019}
J.~Koivum{\"a}ki, W.-H. Zhu, and J.~Mattila, ``Energy-efficient and
high-precision control of hydraulic robots,'' \emph{Control Engineering
	Practice}, vol.~85, pp. 176--193, 2019.

\bibitem{BerNij93}
H.~Berghuis and H.~Nijmeijer, ``A passivity approach to controller-observer
design for robots,'' \emph{IEEE Trans. Robot. Autom.}, vol.~9, no.~6, pp.
740--754, 1993.

\bibitem{Ber18arxiv}
S.~Berkane, ``A survey on output feedback control of robot manipulators with an
application to {PHANT}o{M} 1.5{A} haptic device,'' 2018, arXiv:1812.06809.

\bibitem{NicTom90}
S.~Nicosia and P.~Tomei, ``Robot control by using only joint position
measurements,'' \emph{IEEE Trans. Automat. Control}, vol.~35, no.~9, pp.
1058--1061, 1990.

\bibitem{ZhuChe92}
W.-H. Zhu, H.-T. Chen, and Z.-J. Zhang, ``A variable structure robot control
algorithm with an observer,'' \emph{IEEE Trans. Robot. Autom.}, vol.~8,
no.~4, pp. 486--492, 1992.

\bibitem{BurDav97}
T.~Burg, D.~Dawson, and P.~Vedagarbha, ``A redesigned {DCAL} controller without
velocity measurements: theory and demonstration,'' \emph{Robotica}, vol.~15,
pp. 337--346, 1997.

\bibitem{ZerDix99}
E.~Zergeroglu, W.~Dixon, D.~Haste, and D.~Dawson, ``A composite adaptive output
feedback tracking controller for robotic manipulators,'' \emph{Robotica},
vol.~17, pp. 591--600, 1999.

\bibitem{MalDri12}
S.~Malagari and B.~J. Driessen, ``Globally exponential controller/observer for
tracking in robots without velocity measurement,'' \emph{Asian J. Control},
vol.~14, no.~2, pp. 309--319, 2012.

\bibitem{Dri15}
B.~J. Driessen, ``Observer/controller with global practical stability for
tracking in robots without velocity measurement,'' \emph{Asian J. Control},
vol.~17, no.~5, pp. 1898--1913, 2015.

\bibitem{BuFYaoICRA00}
{Fanping Bu} and {Bin Yao}, ``Observer based coordinated adaptive robust
control of robot manipulators driven by single-rod hydraulic actuators,'' in
\emph{IEEE International Conference on Robotics and Automation}, vol.~3,
2000, pp. 3034--3039.

\bibitem{SirSal01}
M.~R. Sirouspour and S.~E. Salcudean, ``Nonlinear control of hydraulic
robots,'' \emph{IEEE Trans. Robot. Autom.}, vol.~17, no.~2, pp. 173--182,
2001.

\bibitem{KhalBook}
H.~K. Khalil, \emph{Nonlinear systems}, 3rd~ed.\hskip 1em plus 0.5em minus
0.4em\relax Upper Saddle River, NJ: Prentice-Hall, 2002.

\bibitem{AndSod07}
S.~Andersson, A.~S{\"o}derberg, and S.~Bj{\"o}rklund, ``Friction models for
sliding dry, boundary and mixed lubricated contacts,'' \emph{Tribology
	International}, vol.~40, pp. 580--587, 2007.

\bibitem{siciliano2010robotics}
B.~Siciliano, L.~Sciavicco, L.~Villani, and G.~Oriolo, \emph{Robotics:
	modelling, planning and control}.\hskip 1em plus 0.5em minus 0.4em\relax
Springer Science \& Business Media, 2010.

\bibitem{Koivumaki_TMECH2016}
J.~Koivum{\"a}ki and J.~Mattila, ``Stability-guaranteed impedance control of
hydraulic robotic manipulators,'' \emph{IEEE/ASME Trans. Mechatronics},
vol.~22, no.~2, pp. 601--612, 2016.

\bibitem{QuZDawBook}
Z.~Qu and D.~M. Dawson, \emph{Robust Tracking Control of Robot Manipulators},
1st~ed.\hskip 1em plus 0.5em minus 0.4em\relax IEEE Press, 1995.


\end{thebibliography}
\end{document}